\documentclass[11pt]{amsart}
\usepackage{latexsym,amscd,amssymb, graphicx, color, amsthm, bm, amsmath, cancel, enumitem, ytableau}  
\usepackage{tikz}
\usepackage[margin=1in]{geometry}
\usepackage{hyperref}
\usepackage[all,cmtip]{xy}
\usepackage{mathtools}
\usepackage{rotating}
\usetikzlibrary{math, arrows.meta}
\usetikzlibrary{calc}
\usepackage[backend=bibtex]{biblatex}
\numberwithin{equation}{section}

\newtheorem{theorem}{Theorem}[section]
\newtheorem{proposition}[theorem]{Proposition}
\newtheorem{corollary}[theorem]{Corollary}
\newtheorem{lemma}[theorem]{Lemma}
\newtheorem{conjecture}[theorem]{Conjecture}

\newtheorem{problem}[theorem]{Problem}
\newtheorem{example}[theorem]{Example}
\newtheorem{remark}[theorem]{Remark}
\newtheorem{definition}[theorem]{Definition}

\theoremstyle{definition}

\newcommand{\gl}{\mathrm{GL}}
\newcommand{\Frob}{{\mathrm{Frob}}}
\newcommand{\symm}{{\mathfrak{S}}}
\newcommand{\II}{{\mathbf{I}}}
\newcommand{\gr}{{\mathrm {gr}}}

\newcommand{\Res}{\mathrm{Res}}

\newcommand{\grFrob}{{\mathrm{grFrob}}}

\newcommand{\ZZZ}{{\mathcal{Z}}}

\newcommand{\MMM}{{\mathcal{M}}}
\newcommand{\PM}{{\mathcal{PM}}}

\newcommand{\CC}{{\mathbb{C}}}

\newcommand{\Mat}{{\mathrm{Mat}}}

\newcommand{\xxx}{{\mathbf{x}}}
\newcommand{\zzz}{{\mathbf{z}}}

\newcommand{\mmm}{{\mathfrak{m}}}

\newcommand{\CCgraph}[1]{\CC^{\binom{[#1]}{2}}}
\newcommand{\xxxgraph}[1]{\xxx_{\binom{[#1]}{2}}}

\newcommand{\ind}{\mathrm{Ind}}

\title{Plethysm and orbit harmonics}

\author{Hai Zhu}
\address{Department of Mathematics, UC San Diego, La Jolla, CA, 92093, USA}
\email{haz138@ucsd.edu}
\date{\today}

\begin{document}

\begin{abstract}
    Let $\Pi_{(b^a)}$ be the locus of unordered set partitions of $[ab]$ with $a$ blocks of size $b$. We embed unordered set partitions of $[n]$ into the affine space $\mathbb{C}^{\binom{[n]}{2}}$ with coordinate ring $\mathbb{C}\Big[\xxx_{\binom{[n]}{2}}\Big]$. Then, we apply orbit harmonics to $\Pi_{(2^a)}$ and $\Pi_{(a^2)}$, yielding graded $\symm_{2a}$-modules whose graded character formulae respectively refine the Schur expansions of $h_a[h_2]$ and $h_2[h_a]$ according to $\lambda_1$. We further extend this $\lambda_1$-separation phenomenon to quotients of $\mathbb{C}^{\binom{[n]}{2}}$ where $n$ is odd. Combining $\Pi_{(b^a)},\Pi_{(a^b)}$ and orbit harmonics, we propose a conjecture related to Foulkes' conjecture, and we prove the special case $b=2$. We also apply orbit harmonics to the locus $\Pi_{n,m}$ of unordered set partitions of $[n]$ without blocks of size greater than $m$, yielding a graded $\symm_n$-module $R(\Pi_{n,m})$. We determine the standard monomial basis of $R(\Pi_{n,m})$ with respect to any monomial order, as well as its graded character formula.
\end{abstract}

\maketitle

\section{Introduction}\label{sec:intro}
Let $\xxx_N=(x_1,\dots,x_N)$ be a sequence of variables and let $\CC[\xxx_N]$ be the polynomial ring over these variables. Given a finite locus $\ZZZ\subseteq\CC^N$, let $\II(\ZZZ)\subseteq\CC[\xxx_N]$ be its vanishing ideal given by
\[\II(\ZZZ)\coloneqq\{f\in\CC[\xxx_N]\,:\,\text{$f(\zzz)=0$ for all $\zzz\in\ZZZ$}\}.\]
The orbit harmonics method associates to $\II(\ZZZ)$ a homogeneous ideal $\gr\II(\ZZZ)\subseteq\CC[\xxx_N]$ and produces a graded quotient ring $R(\ZZZ)\coloneqq\CC[\xxx_N]/\gr\II(\ZZZ)$. There exists a chain of vector space identifications
\begin{align}\label{eq:intro-chain}
    \CC[\ZZZ]\cong\CC[\xxx_N]/\II(\ZZZ)\cong R(\ZZZ).
\end{align}
If $\ZZZ$ is stable under the action of a matrix group $G\subseteq\gl_N(\CC)$, then the chain \eqref{eq:intro-chain} consists of $G$-module isomorphisms. While $\CC[\ZZZ]$ is an ungraded $\symm_n$-module, $R(\ZZZ)$ is a graded $\symm_n$-module that refines $\CC[\ZZZ]$. This graded structure is governed by the combinatorics of $\ZZZ$.

Geometrically, the orbit harmonics method is a linear deformation from a finite locus $\ZZZ\subseteq\CC^N$ to a scheme supported at the origin with multiplicity $\lvert\ZZZ\rvert$. For example, let $\symm_3$ act on $\CC^3$ by permuting components. The orbit harmonics deformation of a generic $\symm_3$-orbit $\ZZZ$ is shown below. ($\ZZZ$ is contained in a hyperplane, and the $\symm_3$-action is generated by reflections in the lines below).
\begin{center}
 \begin{tikzpicture}[scale = 0.2]
\draw (-4,0) -- (4,0);
\draw (-2,-3.46) -- (2,3.46);
\draw (-2,3.46) -- (2,-3.46);

 \fontsize{5pt}{5pt} \selectfont
\node at (0,2) {$\bullet$};
\node at (0,-2) {$\bullet$};

\node at (-1.73,1) {$\bullet$};
\node at (-1.73,-1) {$\bullet$};
\node at (1.73,-1) {$\bullet$};
\node at (1.73,1) {$\bullet$};

\draw[thick, ->] (6,0) -- (8,0);

\draw (10,0) -- (18,0);
\draw (12,-3.46) -- (16,3.46);
\draw (12,3.46) -- (16,-3.46);

\draw (14,0) circle (15pt);
\draw(14,0) circle (25pt);
\node at (14,0) {$\bullet$};

 \end{tikzpicture}
\end{center}

The application of orbit harmonics occurs in various topics, such as presentations of cohomology rings \cite{MR1168926}, Macdonald theory \cite{MR4223028,MR3783430}, cyclic sieving \cite{MR4359538}, Donaldson–Thomas theory \cite{MR4675069}, and Ehrhart theory \cite{reiner2024harmonicsgradedehrharttheory}.

Rhoades~\cite{MR4821538} initiated the application of the orbit harmonics method to matrix loci. He considered the permutation matrix locus $\symm_n\subseteq\Mat_{n\times n}(\CC)$, showing that the graded structure of $R(\symm_n)$ is governed by the Viennot shadow line avatar of the Schensted correspondence and the first-row length $\lambda_1$ arising from Specht modules $V^\lambda$. His work was extended to other matrix loci in \cite{MR4936857,liu2025extensionviennotsshadowrook,MR4990066,zhu2025rookplacementsorbitharmonics}. Interestingly, Liu, Ma, Rhoades, Zhu~\cite{MR4887799} found that the ``$\lambda_1$-governs-the-graded-structure'' property for $R(\symm_n)$, observed by Rhoades, also holds for the perfect matching locus $\PM_{n}\subseteq\Mat_{n\times n}(\CC)$. Specifically, for an even integer $n$, $\PM_n$ consists of all the symmetric permutation matrices corresponding to fixed-point-free involutions, and the graded $\symm_n$-module character of $R(\PM_n)$ refines the Schur expansion of the plethysm $h_{n/2}[h_2]$ according to $\lambda_1$ arising from the Schur function $s_\lambda$. We extend this orbit harmonics construction to the plethysm $h_2[h_{n/2}]$, yielding a graded $\symm_n$-module refining the Schur expansion of $h_2[h_{n/2}]$ according to $\lambda_1$ as well. Furthermore, we extend our ring structure to odd integers $n$, obtaining two families of graded $\symm_n$-modules whose graded characters are also governed by $\lambda_1$. 

Unlike general matrix loci, we only need the strictly upper triangular part. Consequently, we focus on the affine space $\CC^{\binom{[n]}{2}}$ with coordinate ring $\CC\Big[\xxxgraph{n}\Big]$. We embed all unordered set partitions of $[n]$ into $\CC^{\binom{[n]}{2}}$. For $\mu\vdash n$, let $\Pi_\mu$ be the locus of unordered set partitions of $[n]$ whose block sizes fit in with $\mu$. Let $\Pi_{n,m}$ be the locus of unordered set partitions of $[n]$ whose block sizes are not greater than $m$. Both loci are stable under the $\symm_n$-action, so $R(\Pi_\mu)$ and $R(\Pi_{n,m})$ are graded $\symm_n$-modules. In particular, $\Frob(R(\Pi_{(b^a)}))=h_a[h_b]$, which gives a graded refinement of the plethysm $h_a[h_b]$. Therefore, orbit harmonics may help us understand plethysm and Foulkes' conjecture. We raise Conjecture~\ref{conj:contain} which is stronger than Foulkes' conjecture, and we can solve a special case of our conjecture (see Theorem~\ref{thm:contain_b=2}). Our main results are stated below.

We find three graded character formulae (see Theorems~\ref{thm:grfrob_h_a[h_2]}, \ref{thm:grad_frob_h_2[h_a]} and Proposition~\ref{prop:general-module-str})
\begin{align}&\label{eq:intro-h_a[h_2]}
    \grFrob(R(\Pi_{(2^a)});q)=\sum_{\substack{\lambda\vdash 2a\\ \text{$\lambda$ is even}}} q^{\frac{2a-\lambda_1}{2}}\cdot s_\lambda\\
&\label{eq:intro-h_2[h_a]}
    \grFrob(R(\Pi_{(a^2)});q) = \sum_{d=0}^{\lfloor a/2\rfloor}q^d\cdot s_{(2a-2d,2d)}
\\
&\label{eq:intro-Pi_{n,m}}
    \Frob(R(\Pi_{n,m})_d)=\sum_{\substack{\lambda\vdash n\\ \lambda_1\le m \\ \ell(\lambda)=n-d}}\prod_{i\ge 1}h_{m_i(\lambda)}[h_i]
\end{align}
where $m_i(\lambda)$ is the multiplicity of $i$ in $\lambda$. Note that Equation~\eqref{eq:intro-h_a[h_2]} revisits \cite[Theorem 4.4]{MR4887799}. Moreover, we respectively extend the quotient ring structures of $R(\Pi_{(2^a)})=\CC\Big[\xxxgraph{2a}\Big]\Big/\gr\II(\Pi_{(2^a)})$ and $R(\Pi_{(a^2)})=\CC\Big[\xxxgraph{2a}\Big]\Big/\gr\II(\Pi_{(a^2)})$ to obtain $\CC\Big[\xxxgraph{n}\Big]\Big/I(n)$ and $\CC\Big[\xxxgraph{n}\Big]\Big/J(n)$ where $n$ may be odd. The graded character formulae of these two new quotients respectively generalize Equations~\eqref{eq:intro-h_a[h_2]} and \eqref{eq:intro-h_2[h_a]}, which are also governed by $\lambda_1$ (see Theorems~\ref{thm:generalize-h_a[h_2]} and \ref{thm:generalize-h_2[h_a]}):
\begin{align}
    &\label{eq:intro-general-h_a[h_2]}\grFrob(\CC\Big[\xxxgraph{n}\Big]\Big/I(n);q) = \sum_{\substack{\lambda\vdash n \\ \text{$2\mid\lambda_i$ for $i>1$}}} q^{\frac{n-\lambda_1}{2}} \cdot s_\lambda \\
    &\label{eq:intro-general-h_2[h_a]}\grFrob\Big(\CC\Big[\xxxgraph{n}\Big]\Big/J(n);q\Big) = \sum_{d=0}^{\lfloor n/4 \rfloor} q^d \cdot s_{(n-2d,2d)}.
\end{align}
Note that \eqref{eq:intro-general-h_2[h_a]} indicates that $\CC\Big[\xxxgraph{n}\Big]_d\Big/J(n)_d\cong V^{(n-2d,2d)}$, which provides a new model to understand the Specht module $V^{(n-2d,2d)}$.

For Gröbner theory and ring structure, we find the standard monomial bases of $R(\Pi_{n,m})$ with respect to any monomial order (see Theorem~\ref{thm:general-satandard}). We also give explicit presentations (i.e. concise generating set of defining ideals) of the orbit harmonics rings $R(\Pi_{(2^a)}),R(\Pi_{(a^2)}),R(\Pi_{n,m})$ (see Theorem~\ref{thm:grfrob_h_a[h_2]}, Proposition~\ref{prop:ideal_h_2[h_a]}, and Corollary~\ref{cor:basis-presentation}).

The rest of this paper is structured as follows. In Section~\ref{sec:background}, we introduce some necessary background knowledge about Gröbner theory, orbit harmonics, symmetric functions and symmetric group representation theory. In Section~\ref{sec:main}, we first describe how to embed unordered set partitions into $\CC^{\binom{[n]}{2}}$ and relevant $\symm_n$-actions, then study $R(\Pi_{(2^a)})$ and $R(\Pi_{(a^2)})$ in Subsection~\ref{subsec:b=2}, extend their ring structures and graded module structures in Subsection~\ref{subsec:length-generalize}, and study $R(\Pi_{n,m})$ in Subsection~\ref{subsec:general}. In Section~\ref{sec:conclusion}, we present future directions and open problems.

\section{Background}\label{sec:background}

\subsection{Gröbner Theory}\label{subsec:grobner}
Let $\xxx_N = (x_1, x_2, \dots x_N)$ be a sequence of $N$ variables, and let $\CC[\xxx_N]$ be the polynomial ring over these variables. A total order $<$ on the set of monomials in $\CC[\xxx_N]$ is called a {\em monomial order} if
\begin{itemize}
    \item $1 \leq m$ for every monomial $m \in \CC[\xxx_N]$
    \item for monomials $m, \, m_1, \, m_2$, we have that $m_1 < m_2$ implies $mm_1 < mm_2$.
\end{itemize}

Let $<$ be a monomial order. For any nonzero polynomial $f \in \CC[\xxx_N]$, the {\em initial monomial} $\text{in}_{<} f$ of $f$ is the largest monomial in $f$ with respect to $<$. Given an ideal $I \subseteq \CC[\xxx_N]$, define its {\em initial ideal} by
\begin{equation*}
    \text{in}_{<} I = \langle \text{in}_{<}f : f \in I, f\neq 0 \rangle.
\end{equation*}
A monomial $m \in \CC[\xxx_N]$ is called a {\em standard monomial} of $I$ if it is not an element of $\text{in}_{<} I$. It is well known that
the collection of all the standard monomials of $I$ descends to a basis of the vector space $\CC[\xxx_N]/I$. This is called the {\em standard monomial basis}. See \cite{MR4952933} for further details on Gröbner theory.

\subsection{Orbit harmonics}\label{subsec:orbit-harmonics}
Let $\ZZZ\subseteq\CC^N$ be a finite locus. Let $\CC[\xxx_N]$ be the coordinate ring of the affine space $\CC^N$ where $\xxx_N=(x_1,\dots,x_N)$ is the sequence of all the variables. The \emph{vanishing ideal} $\II(\ZZZ)\subseteq\CC[\xxx_N]$ of $\ZZZ$ is given by
\[\II(\ZZZ)\coloneqq\{f\in\CC[\xxx_N]\,:\,\text{$f(\zzz)=0$ for all $\zzz\in\ZZZ$}\}.\]
Let $\CC[\ZZZ]$ be the vector space consisting of all the functions $f\,:\,\ZZZ\rightarrow\CC$. Multivariate Lagrange interpolation gives the vector space isomorphism
\begin{align}\label{eq:ungrad-isom}
    \CC[\ZZZ]\cong\CC[\xxx_N]/\II(\ZZZ).
\end{align}

Given any nonzero polynomial $f\in\CC[\xxx_N]$, we write $f=f_0+\dots+f_d$ where $f_k$ is homogeneous for $1\le k\le d$ and $f_d\neq 0$. Denote the top-degree homogeneous part $f_d$ of $f$ by $\tau(f)\coloneqq f_d$. For any ideal $I\subseteq\CC[\xxx_N]$, the \emph{associated graded ideal} $\gr I$ of $I$ is the homogeneous ideal
\[\gr I\coloneqq \langle \tau(f)\,:\,0\neq f\in I\rangle.\]
\begin{remark}\label{rmk:generator-difficult}
    Given a generating set $I=\langle g_1,\dots,g_m\rangle$, the containment $\langle \tau(g_1),\dots,\tau(g_m)\rangle\subseteq\gr I$ is usually strict. However, the equality holds if $g_1,\dots,g_m$ form a Gröbner basis of $I$. In general, finding a concise generating set for $\gr I$ is difficult. 
\end{remark}

The orbit harmonics method yields a graded ring $R(\ZZZ)\coloneqq\CC[\xxx_N]/\gr\II(\ZZZ)$. A standard result for orbit harmonics extends the vector space isomorphism \eqref{eq:ungrad-isom} to a chain of vector space isomorphisms
\begin{align}\label{eq:grad-isom}
    \CC[\ZZZ]\cong\CC[\xxx_N]/\II(\ZZZ)\cong R(\ZZZ).
\end{align}
If $\ZZZ$ is stable under the action of a subgroup $G\subseteq \gl(\CC^N)$, we can further regard \eqref{eq:grad-isom} as a chain of $G$-module isomorphisms. Interestingly, $R(\ZZZ)$ has the additional structure of a graded $G$-module, which provides a graded refinement for $\CC[\ZZZ]$.

\subsection{Combinatorics}\label{subsec:combinatorics}

For any nonnegative integer $n$, a \emph{partition} of $n$ is a weakly decreasing sequence $\lambda$ of nonnegative integers
\[\lambda = (\lambda_1\ge\lambda_2\ge\dots\ge\lambda_m)\]
with $\sum_{i=1}^m \lambda_i = n$. Here, we call $n$ the \emph{size} of $\lambda$, which is denoted by $\lvert\lambda\rvert = n$ or $\lambda\vdash n$. One abuse of notation is that we allow appending several zeros to $\lambda$, yielding the identification
\[(\lambda_1,\dots,\lambda_m) = (\lambda_1,\dots,\lambda_m,0,\dots,0).\]
The components $\lambda_i$ are called the \emph{parts} of $\lambda$. In particular, we say that $\lambda$ is \emph{even} if all of its parts are even.
\begin{remark}\label{rmk:repeat}
We may write a repeated-part subsequence $\overbrace{b,b,\dots,b}^a$ for short by $b^a$, such as the abbreviation $(5,5,5,2,2,2,1) = (5^3,2^3,1)$. The reader should be careful not to mix this notation up with power. We never denote power by $b^a$ in this paper.
\end{remark}
Given a partition $\lambda=(\lambda_1,\dots,\lambda_m)$, its \emph{length} $\ell(\lambda)$ is the maximal index $k$ such that $\lambda_k\neq 0$. We always identify a partition $\lambda$ with its \emph{Young diagram}, a diagram with $\lambda_i$ left-justified cells on the $i$-th row for all $i\ge 1$. Taking the transpose of the Young diagram of $\lambda$, we obtain the \emph{conjugate} $\lambda^\prime$ of $\lambda$. For example, the Young diagrams of $\lambda=(4,3,1)\vdash 8$ and its conjugate $\lambda^\prime\vdash 8$ are shown below. In this example, we have $\ell(\lambda)=3$ and $\ell(\lambda^\prime)=\lambda_1=4$.
\begin{center}
    \begin{ytableau}
    \: & \: & \: & \: \cr
    \: & \: & \: \cr
    \: \cr
\end{ytableau}
\quad
    \begin{ytableau}
    \: & \: & \: \cr
    \: & \: \cr
    \: & \: \cr
    \:
\end{ytableau}
\end{center}

A \emph{skew partition} $\lambda/\mu$ is a pair of partitions $\lambda$ and $\mu$ such that $\mu\subseteq\lambda$. We also identify $\lambda/\mu$ with its \emph{Young diagram} given by removing the cells of $\mu$ from $\lambda$. The \emph{size} $\lvert\lambda/\mu\rvert$ of $\lambda/\mu$ is given by $\lvert\lambda/\mu\rvert = \lvert\lambda\rvert-\lvert\mu\rvert$. In particular, $\lambda/\mu$ is called a \emph{horizontal strip} if it has at most one cell in every column. For example, we consider the inner partition $\mu=(3,3,1,1)$ and the outer partition $\lambda=(4,3,3,1)$, then the Young diagram $\lambda/\mu$ is marked with bullets $\bullet$ shown below.
\[\ydiagram[*(white) \bullet]
    {3+1,3+0,1+2,1+0}
    *[*(white)]{3,3,1,1}\]
As $\lambda/\mu$ has at most one cell in each column, $\lambda/\mu$ is a horizontal strip.

\subsection{Representation theory of symmetric groups}\label{subsec:rep}
Let $\Lambda = \bigoplus_{n\ge 0} \Lambda_n$ be the graded ring of symmetric functions in infinitely many variables $x_1,x_2,\dots$ over the ground field $\CC(q)$. The space $\Lambda_n$ of symmetric functions of degree $n$ admits several well-known $\CC(q)$-linear bases indexed by partitions of $n$: the \emph{monomial basis} $\{m_\lambda\}_{\lambda\vdash n}$, the \emph{complete homogeneous basis} $\{h_\lambda\}_{\lambda\vdash n}$, the \emph{elementary basis} $\{e_\lambda\}_{\lambda\vdash n}$, the \emph{power sum basis} $\{p_\lambda\}_{\lambda\vdash n}$, and the \emph{Schur basis} $\{s_\lambda\}_{\lambda\vdash n}$. See \cite{MR3443860} for their definitions and properties. We will primarily use the Schur basis, the complete homogeneous basis, and the power sum basis, and we write $h_d,p_d$ respectively for $h_{(d)},p_{(d)}$.

A symmetric function $F\in\Lambda$ is \emph{Schur-positive} if its Schur expansion $F=\sum_{\lambda}c_\lambda (q)\cdot s_\lambda$ has nonnegative coefficients $c_\lambda (q) \in \mathbb{R}_{\ge 0}[q]$. We assign a partial order $\le$ to $\Lambda$ by
\[\text{$F\le G$ if and only if $G-F$ is Schur-positive}.\]
We also define the \emph{truncation operator} $\{-\}_P$ where $P$ is a condition. For $F = \sum_\lambda c_\lambda(q)\cdot s_\lambda\in\Lambda$, we define $\{F\}_P$ by
\[\{F\}_P\coloneqq\sum_{\text{$\lambda$ satisfies $P$}}c_\lambda(q)\cdot s_\lambda.\]
We will use the truncation operator to restrict the first row lengths of partitions, e.g.
\[\{F\}_{\lambda_1 \le a} = \sum_{\lambda_1 \le a} c_\lambda(q)\cdot s_\lambda.\]

The multiplication of Schur functions by complete homogeneous functions satisfies the Pieri rule: For any partition $\mu$ and any integer $a\ge 0$, we have that
\[s_\mu\cdot h_a = \sum_{\lambda/\mu}s_\lambda\] summing over horizontal strips $\lambda/\mu$ of size $a$.

In addition to the ordinary multiplication, the symmetric function ring $\Lambda$ carries a binary operation called \emph{plethysm}, which is denoted by $(F,G)\mapsto F[G]$. For $G\in\Lambda$, we have $p_n[G] = G(x_1^n,x_2^n,\dots)$. As $\{p_1,p_2,\dots\}$ form an algebraically independent generating set of $\Lambda$, we can compute $F[G]$ for all $F,G\in\Lambda$ using the following properties
\begin{align*}
    \begin{cases}
        (F_1 + F_2 )[G] = F_1[G] + F_2[G] \\
        (F_1\cdot F_2)[G] = F_1[G]\cdot F_2[G] \\
        c[G]= c
    \end{cases}
\end{align*}
for all $F_1,F_2,G\in\Lambda$ and $c\in\CC(q)$. Please refer to \cite{MR3443860} for more details.

Consider the symmetric group $\symm_n$. All the irreducible $\symm_n$-modules are in one-to-one correspondence with all the partitions of $n$, called \emph{Specht modules}. The Specht module corresponding to $\lambda\vdash n$ is denoted by $V^\lambda$. Given an $\symm_n$-module $V\cong\bigoplus_{\lambda\vdash n}(V^\lambda)^{\oplus c_\lambda}$ with unique multiplicities $c_\lambda\in\mathbb{Z}_{\ge 0}$, we study the $\symm_n$-module structure of $V$ using its \emph{Frobenius image} $\Frob(V)\in\Lambda$ given by
\[\Frob(V)\coloneqq\sum_{\lambda\vdash n}c_{\lambda}\cdot s_\lambda.\]
If $V=\bigoplus_{d=0}^m V_d$ is a graded $\symm_n$-module, we define its \emph{graded Frobenius image} $\grFrob(V;q)\in\Lambda$ by
\[\grFrob(V;q)\coloneqq\sum_{d=0}^m q^d\cdot \Frob(V_d).\]

For integers $n,m\ge 0$, we embed $\symm_n\times\symm_m$ into $\symm_{n+m}$ by letting $\symm_n$ act on $\{1,\dots,n\}$ and letting $\symm_m$ act on $\{n+1,\dots,n+m\}$. Therefore, if $V$ is an $\symm_n$-module and $W$ is an $\symm_m$-module, then $V\otimes W$ is naturally an $\symm_n\times\symm_m$-module so that we can regard the induction representation $\ind_{\symm_n\times\symm_m}^{\symm_{n+m}}(V\otimes W)$. The \emph{induction product} of $V$ and $W$ is defined by
\[V\circ W\coloneqq\ind_{\symm_n\times\symm_m}^{\symm_{n+m}}(V\otimes W).\]
Induction product is compatible with multiplication of symmetric functions (see e.g. \cite{MR3443860,sagan2013symmetric}):
\[\Frob(V\circ W) = \Frob(V)\cdot\Frob(W).\]

For integers $n,m\ge 0$, the \emph{wreath product} $\symm_n\wr\symm_m$ is the subgroup of $\symm_{nm}$ generated by:
\begin{itemize}
    \item the $n$-fold direct product $(\symm_m)^n$ where the $i$-th copy of $\symm_m$ acts on $\{(i-1)m+1,(i-1)m+2,\dots,im\}$ for all $1\le i\le n$, and
    \item all the products of transpositions \[((i-1)m+1,(j-1)m+1)((i-1)m+2,(j-1)m+2)\cdots(im,jm)\]
    for $1\le i,j\le n$.
\end{itemize}
The wreath product $\symm_n\wr\symm_m$ may be intuitively thought of as follows. Fill the cells of an $n\times m$ grid with $1,2,\dots,nm$ in English reading order (see the figure below for an example with $n=3,m=4$). Then $\symm_n\wr\symm_m$ is the subgroup of $\symm_{nm}$ generated by permutations which either permute entries in one row or permute rows without rearranging entries within rows.
\begin{center}
    \begin{ytableau}
    1 & 2 & 3 & 4 \cr
    5 & 6 & 7 & 8 \cr
    9 & 10 & 11 & 12
\end{ytableau}
\end{center}
Let $V$ be an $\symm_n$-module and let $W$ be an $\symm_m$-module. The $(\symm_n\wr\symm_m)$-module $V\wr W$ is the vector space $V\otimes (W^{\otimes n})$ carrying the $(\symm_n\wr\symm_m)$-action by
\[\begin{cases}
    (u_1,\dots,u_n)\cdot (v\otimes w_1\otimes\dots\otimes w_n) \coloneqq v\otimes (u_1\cdot w_1)\otimes\dots\otimes (u_n\cdot w_n), & (u_1,\dots,u_n)\in (\symm_m)^n \\
    g\cdot (v\otimes w_1\otimes\dots\otimes w_n) \coloneqq (g\cdot v)\otimes w_{g^{-1}(1)}\otimes\dots\otimes w_{g^{-1}(n)}, & g\in\symm_n.
\end{cases}\]
Wreath product aligns with plethysm of symmetric functions (see \cite{MR3443860}):
\[\Frob(\ind_{\symm_n\wr\symm_m}^{\symm_{nm}}(V\wr W)) = \Frob(V)[\Frob(W)].\]

For any partition $\lambda\vdash n$, let $\Pi_\lambda$ be the set of unordered set partitions of $[n]$ with block sizes corresponding to $\lambda$. That is, we have
\[\Pi_\lambda \coloneqq \bigg\{\{B_1,\dots,B_{\ell(\lambda)}\}\,:\,\text{$\bigsqcup_{i=1}^{\ell(\lambda)}B_i = [n]$ where $\lvert B_i \rvert = \lambda_i$ for $1\le i\le \ell(\lambda)$}\bigg\}.\]
Plethysm can be used to study the $\symm_n$-representation theory with respect to $\Pi_\lambda$.
\begin{lemma}\label{lem:plethysm-set-partition}
    Let $\symm_{nm}$ act on $\Pi_{(m^n)}$ by permuting the elements of $[nm]$, yielding an $\symm_{nm}$-module structure of $\CC[\Pi_{(m^n)}]$. Then we have
    \[\Frob(\CC[\Pi_{(m^n)}]) = h_n[h_m].\]
\end{lemma}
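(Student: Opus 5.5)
The plan is to realize $\CC[\Pi_{(m^n)}]$ as a permutation module and identify it with an induced module from the wreath product, so that the wreath product formula recalled above,
\[\Frob(\ind_{\symm_n\wr\symm_m}^{\symm_{nm}}(V\wr W)) = \Frob(V)[\Frob(W)],\]
finishes the argument.

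First, observe that $\symm_{nm}$ acts transitively on $\Pi_{(m^n)}$: any two set partitions of $[nm]$ into $n$ blocks of size $m$ differ by a permutation carrying blocks to blocks. Hence $\CC[\Pi_{(m^n)}]\cong\ind_{H}^{\symm_{nm}}\mathbf{1}$, where $H$ is the stabilizer of a chosen base point and $\mathbf{1}$ is the trivial representation. Here we use the standard identification of the permutation module on a transitive $G$-set $G/H$ with the induced trivial module.

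Second, take as base point the partition whose blocks are the rows of the $n\times m$ grid filled in English reading order, that is, $\{(i-1)m+1,\dots,im\}$ for $1\le i\le n$. We check that its stabilizer is exactly $\symm_n\wr\symm_m$.
\begin{itemize}
\item The generators of $\symm_n\wr\symm_m$ listed earlier permute entries within a row or permute whole rows, so they fix the partition and $\symm_n\wr\symm_m\subseteq H$.
\item Conversely, an element of $H$ permutes the rows as blocks, say via some $g\in\symm_n$. Composing with the row-swapping lift of $g^{-1}$ gives a permutation preserving each row, which lies in $(\symm_m)^n$. Hence $H\subseteq\symm_n\wr\symm_m$.
\end{itemize}
As a cross-check, both groups have order $(m!)^n n!$.

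Third, observe that the trivial $\symm_n\wr\symm_m$-module equals $\mathbf{1}_{\symm_n}\wr\mathbf{1}_{\symm_m}$. Indeed, $V\otimes W^{\otimes n}$ with both $V,W$ trivial is one-dimensional, and both kinds of generators act by the identity; the permutation of tensor factors is trivial because all $w_i$ coincide. Applying the wreath product formula with $\Frob(\mathbf{1}_{\symm_n})=h_n$ and $\Frob(\mathbf{1}_{\symm_m})=h_m$ then gives $\Frob(\CC[\Pi_{(m^n)}])=h_n[h_m]$.

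The only mildly delicate step is the stabilizer identification, and the order count above settles it.
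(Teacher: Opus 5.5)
Your proposal is correct and matches the paper's proof: you identify $\CC[\Pi_{(m^n)}]$ with $\ind_{\symm_n\wr\symm_m}^{\symm_{nm}}(\mathbf{1}_{\symm_n}\wr\mathbf{1}_{\symm_m})$ via the stabilizer of a base point, then apply the wreath-product plethysm formula. Your explicit checks of transitivity and of the stabilizer, which the paper only asserts, are a welcome addition.
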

\begin{proof}
    Since the stabilizer of $\Pi_{(m^n)}$ is $\symm_n\wr\symm_m\subseteq\symm_{nm}$, we have an $\symm_{nm}$-equivariant bijection
    \[\Pi_{(m^n)}\overset{1:1}{\rightarrow}\symm_{nm}/(\symm_n\wr\symm_m).\]
    Then we have a chain of $\symm_{nm}$-module isomorphisms
    \begin{align*}
        \CC[\Pi_{(m^n)}] \cong \CC[\symm_{nm}/(\symm_n\wr\symm_m)] \cong\ind_{\symm_n\wr\symm_m}^{\symm_{nm}}\mathbf{1}_{\symm_n\wr\symm_m}\cong\ind_{\symm_n\wr\symm_m}^{\symm_{nm}}(\mathbf{1}_{\symm_n}\wr\mathbf{1}_{\symm_m}).
    \end{align*}
    It follows that
    \[\Frob(\CC[\Pi_{(m^n)}]) = \Frob(\ind_{\symm_n\wr\symm_m}^{\symm_{nm}}(\mathbf{1}_{\symm_n}\wr\mathbf{1}_{\symm_m})) = \Frob(\mathbf{1}_{\symm_n})[\Frob(\mathbf{1}_{\symm_m})] = h_n[h_m].\]
\end{proof}
Finding a positive formula for the Schur expansion of $h_a[h_b]$ is a dramatically difficult open problem. However, for the special case $a=2$ or $b=2$, $h_a[h_b]$ has concise Schur expansion:
\begin{align}\label{eq:h_a[h_2]}
    h_a[h_2] &= \sum_{\substack{\lambda\vdash 2a \\ \text{$\lambda$ is even}}} s_{\lambda} \\
    \label{eq:h_2[h_b]}
    h_2[h_b] &=\sum_{d=0}^{\lfloor b/2\rfloor} s_{(2b-2d,2d)}.
\end{align}

We will need one standard result to govern the length of the first row of a partition. For $1\le j\le n$, we use the embedding $\symm_j\subseteq\symm_n$ by letting $\symm_j$ permute $1,2,\dots,j\in[n]$. Let $\eta_j\in\CC[\symm_j]\subseteq\CC[\symm_n]$ be the symmetrizer
\[\eta_j\coloneqq\sum_{w\in\symm_j}w.\]
\begin{lemma}\label{lem:row-length}
    Let $1\le j\le n$. For a partition $\lambda\vdash n$, we have $\eta_j\cdot V^\lambda \neq 0$ if and only if $\lambda_1\ge j$.
\end{lemma}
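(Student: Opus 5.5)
The idea is to identify $\eta_j\cdot V^\lambda$ with a space of invariants and count it by Frobenius reciprocity and the Pieri rule. Since $\eta_j$ is $j!$ times the idempotent projecting onto $\symm_j$-invariants, we have
\[\eta_j\cdot V^\lambda=(V^\lambda)^{\symm_j},\]
the subspace of vectors fixed by $\symm_j\subseteq\symm_n$. Indeed, $w\eta_j=\eta_j$ for $w\in\symm_j$, so the image consists of invariants. Conversely, for an invariant $v$ we have $\eta_j v=j!\,v$, so every invariant lies in the image. It therefore suffices to show that $(V^\lambda)^{\symm_j}\neq 0$ if and only if $\lambda_1\ge j$.

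To compute this dimension, consider $\symm_j\times\symm_{n-j}\subseteq\symm_n$. Then
\[(V^\lambda)^{\symm_j}=\bigoplus_{\mu\vdash n-j}\big(\text{multiplicity of } V^{(j)}\otimes V^\mu \text{ in } \Res^{\symm_n}_{\symm_j\times\symm_{n-j}}V^\lambda\big)\otimes V^\mu\]
as an $\symm_{n-j}$-module. By Frobenius reciprocity and the compatibility $\Frob(V\circ W)=\Frob(V)\Frob(W)$, this multiplicity equals the coefficient of $s_\lambda$ in $h_j\cdot s_\mu$. By the Pieri rule, that coefficient is $1$ if $\lambda/\mu$ is a horizontal strip of size $j$, and $0$ otherwise. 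Hence $(V^\lambda)^{\symm_j}\neq0$ exactly when some $\mu\subseteq\lambda$ makes $\lambda/\mu$ a horizontal strip of size $j$.

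It remains to establish the combinatorial claim: $\lambda$ contains a horizontal strip of size $j$ with partition complement if and only if $\lambda_1\ge j$. A horizontal strip has at most one cell per column, and $\lambda$ has exactly $\lambda_1$ columns, so $j\le\lambda_1$ is necessary. Conversely, let $\mu$ be $\lambda$ with the bottom cell removed from each of the last $j$ columns. Concretely, in columns $c=\lambda_1-j+1,\dots,\lambda_1$ we delete the lowest cell. The result is still a partition, because column lengths $\lambda'_c$ weakly decrease and we subtract $1$ from a terminal segment of columns. The removed cells form a horizontal strip, since there is one per column.

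The only point needing care is checking that $\mu$ is a partition, which follows from the column-length monotonicity just noted; the rest is standard.
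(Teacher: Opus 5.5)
Your proof is correct, but after the shared first step $\eta_j\cdot V^\lambda=(V^\lambda)^{\symm_j}$ it takes a different route from the paper's.

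\textbf{The paper's route.} It restricts all the way down to $\symm_j$ and iterates the branching rule. The multiplicity of $\mathbf{1}_{\symm_j}=V^{(j)}$ in $\Res^{\symm_n}_{\symm_j}V^\lambda$ counts chains of single-cell removals from $\lambda$ down to $(j)$, that is, standard fillings of $\lambda/(j)$. Such a chain exists iff $(j)\subseteq\lambda$, i.e.\ iff $\lambda_1\ge j$.

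\textbf{Your route.} You restrict to the Young subgroup $\symm_j\times\symm_{n-j}$. Frobenius reciprocity and the Pieri rule then give $(V^\lambda)^{\symm_j}\cong\bigoplus_\mu V^\mu$ as $\symm_{n-j}$-modules, with $\mu\vdash n-j$ ranging over shapes for which $\lambda/\mu$ is a horizontal strip. What remains is a small combinatorial existence question. Your column-by-column construction settles it correctly: the boundary inequality $\lambda'_{\lambda_1-j}\ge\lambda'_{\lambda_1-j+1}>\lambda'_{\lambda_1-j+1}-1$ is exactly what makes $\mu$ a partition.

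\textbf{Comparison.} The paper's argument is shorter and needs only the branching rule. Its combinatorial content reduces to the triviality $(j)\subseteq\lambda\iff\lambda_1\ge j$, although the paper leaves the ``iterating'' step implicit. Your argument needs a slightly more substantial combinatorial lemma. In exchange it yields more: the full $\symm_{n-j}$-module structure of the invariant space. It also uses the same Pieri-rule machinery the paper already relies on elsewhere, for instance in computing $\{h_{2d}\cdot h_{2a-2d}\}_{\lambda_1=2a-2d}$.

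\textbf{Cosmetic point.} In your displayed decomposition you tensor a multiplicity, which is a number, with $V^\mu$. Write $(V^\mu)^{\oplus c_\mu}$ instead.
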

\begin{proof}
    Note that $\eta_j\cdot V^\lambda = (V^\lambda)^{\symm_j} =\{v\in V^\lambda\,:\,\text{$w\cdot v = v$ for all $w\in\symm_j$}\}$. Therefore, we have that
    \[\text{$\eta_j\cdot V^\lambda \neq 0$ if and only if $(\Res_{\symm_j}^{\symm_n}(V^\lambda))^{\symm_j} \neq 0$}.\]
    Thus, $\eta_j\cdot V^\lambda \neq 0$ if and only if the multiplicity of $\mathbf{1}_{\symm_j}$ in $\Res_{\symm_j}^{\symm_n}(V^\lambda)$ is positive. The Branching Rule (see e.g. \cite{MR3443860,sagan2013symmetric}) indicates that
    \[\Res_{\symm_{n-1}}^{\symm_n}(V^\lambda) = \bigoplus_\mu V^\mu\]
    summing over all the partitions $\mu\vdash n-1$ obtained by removing one cell from $\lambda$. Iterating this rule, we deduce that $\mathbf{1}_{\symm_j} = V^{(j)}$ occurs in $\Res_{\symm_j}^{\symm_n}(V^\lambda)$ with positive multiplicity if and only if $\lambda_1\ge j$.
\end{proof}


\section{Plethysm and orbit harmonics}\label{sec:main}
Given an unordered set partition $\pi = \{B_1,\dots,B_\ell\}$ of $[n]$ (which means that $B_i\neq \varnothing$ for $1\le i\le\ell$ and $\bigsqcup_{i=1}^\ell B_i = [n]$), we identify $\pi$ with the point $\zzz=(z_{\{i,j\}})_{1\le i<j\le n}\in\CCgraph{n}$ given by
\[z_{\{i,j\}} = \begin{cases}
    1, &\text{if $\{i,j\}\subseteq B_k$ for some $1\le k\le\ell$} \\
    0, &\text{otherwise}.
\end{cases}.\]
Intuitively, we identify $\pi$ with a graph obtained by connecting all pairs of vertices in the same block of $\pi$, and then let $\zzz$ be half of the adjacency matrix of this graph which is the disjoint union of $\ell$ complete subgraphs. As a result, we can embed each locus $\ZZZ$ consisting of some unordered set partitions of $[n]$ into $\CCgraph{n}$ and apply orbit harmonics to it.

Let $\xxxgraph{n}$ be the array of variables $(x_{\{i,j\}})_{1\le i<j\le n}$ indexed by $\binom{[n]}{2}$. Let $\CC\Big[\xxxgraph{n}\Big]$ be the polynomial ring over these variables. If further $\ZZZ$ is stable under the $\symm_n$-action permuting $[n]$ (e.g. $\ZZZ=\Pi_\lambda$), we have an $\symm_n$-module identification $\CC[\ZZZ]\cong R(\ZZZ)$ (see \eqref{eq:grad-isom}) where the $\symm_n$-action on $R(\ZZZ)=\CC\Big[\xxxgraph{n}\Big]\Big/\gr\II(\ZZZ)$ is algebraically generated by $w\cdot x_{\{i,j\}} = x_{w(i),w(j)}$.
\begin{remark}\label{rmk:interchange-index}
    In our setting, the indices are unordered sets with $2$ elements. Therefore, we have that $x_{\{i,j\}} = x_{\{j,i\}}$.
\end{remark}

As mentioned in Lemma~\ref{lem:plethysm-set-partition}, plethysm is tightly related to unordered set partitions. It follows that
\[\Frob(R(\Pi_{(b^a)})) = h_a[h_b].\]
Therefore, the orbit harmonics method offers a potential approach to studying Foulkes' conjecture. Specifically, we give the following stronger conjecture.
\begin{conjecture}\label{conj:contain}
    For positive integers $a\ge b$, we have that
    \[\gr\II(\Pi_{(b^a)})\subseteq\gr\II(\Pi_{(a^b)}).\]
\end{conjecture}
In fact, once Conjecture~\ref{conj:contain} holds, then we would obtain an $\symm_n$-module surjection $R(\Pi_{(b^a)})\twoheadrightarrow R(\Pi_{(a^b)})$ and hence $h_a[h_b] = \Frob(R(\Pi_{(b^a)})) \ge \Frob(R(\Pi_{(a^b)})) =h_b[h_a]$. Currently, we can only solve the special case $b=2$ of Conjecture~\ref{conj:contain}. See Subsection~\ref{subsec:b=2} for details.

\subsection{Understanding $h_a[h_2]$ and $h_2[h_a]$ via orbit harmonics}\label{subsec:b=2}
For $h_a[h_2]$, the following result follows from \cite{MR4887799} and gives a graded refinement of $h_a[h_2]$ and a generating set of $\gr\II(\Pi_{(2^a)})$.
\begin{theorem}\label{thm:grfrob_h_a[h_2]}
    For all positive integers $a$, we have that
    \[\grFrob(R(\Pi_{(2^a)});q) = \sum_{\substack{\lambda\vdash 2a \\ \text{$\lambda$ is even}}} q^{\frac{2a-\lambda_1}{2}} \cdot s_{\lambda}\]
    and that the defining ideal $\gr\II(\Pi_{(2^a)})$ of $R(\Pi_{(2^a)})$ is generated by
    \begin{itemize}
        \item all products $x_S\cdot x_T$ for $S,T\in\binom{[2a]}{2}$ with $S\cap T\neq\varnothing$ ($S$ may equal $T$), and
        \item all sums $\sum_{\substack{j\in[2a] \\ j\neq i}} x_{\{i,j\}}$ for $i\in[2a]$.
    \end{itemize}
\end{theorem}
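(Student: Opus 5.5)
Let $I\subseteq\CC\Big[\xxxgraph{2a}\Big]$ be the ideal generated by the two families in the statement. The plan has three steps. First, show $I\subseteq\gr\II(\Pi_{(2^a)})$. Second, bound $\dim$ of each graded piece of $\CC\Big[\xxxgraph{2a}\Big]/I$ from above by the claimed graded character. Third, compare total dimensions via \eqref{eq:grad-isom}, Lemma~\ref{lem:plethysm-set-partition} and \eqref{eq:h_a[h_2]}.

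For the first step, on $\Pi_{(2^a)}$ every coordinate satisfies $z_S^2=z_S$. Moreover, if $S\ne T$ with $S\cap T\ne\varnothing$, then $z_Sz_T=0$, since each vertex lies in exactly one edge of a perfect matching. Taking top-degree parts gives $x_S^2\in\gr\II$ and $x_Sx_T\in\gr\II$. Also $\sum_{j\ne i}z_{\{i,j\}}=1$ on the locus, so $\tau$ of $\sum_j x_{\{i,j\}}-1$ is the linear sum. Hence $I\subseteq\gr\II(\Pi_{(2^a)})$, and $R(\Pi_{(2^a)})$ is a quotient of $\CC\Big[\xxxgraph{2a}\Big]/I$.

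For the second step, the product relations kill every monomial except squarefree monomials whose edges are pairwise disjoint, i.e.\ partial matchings. So $(\CC\Big[\xxxgraph{2a}\Big]/I)_d$ is spanned by the $d$-edge matchings. As an $\symm_{2a}$-module this span is $\ind$ from $(\symm_d\wr\symm_2)\times\symm_{2a-2d}$ of the trivial module, with Frobenius image $h_d[h_2]\cdot h_{2a-2d}$. The linear relations then cut this down. Multiplying $\sum_j x_{\{i,j\}}$ by a $(d-1)$-matching $m$ avoiding $i$ shows that
\[\sum_{j\notin V(m)\cup\{i\}} x_{\{i,j\}}\,m \equiv 0,\]
because edges meeting $V(m)$ vanish against $m$. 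I expect this to identify the degree-$d$ piece with a quotient of the $d$-matching module by the image of a ``boundary'' map from $(d-1)$-matchings with a marked free vertex. The aim is to show that the degree-$d$ piece of the quotient is a quotient of
\[\{h_d[h_2]h_{2a-2d}\}_{\lambda_1=2a-2d}.\]
Equivalently, using Lemma~\ref{lem:row-length}, the piece is killed by nothing with $\lambda_1>2a-2d$ in the following sense. By Pieri, the components of $h_d[h_2]h_{2a-2d}$ with $\lambda_1>2a-2d$ are exactly those already in $h_{d-1}[h_2]h_{2a-2d+2}$ (the degree-$(d-1)$ module). The relations say precisely that the symmetrization over the unmatched vertices plus one more vertex kills the class. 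This is the main obstacle. I would try to prove that the images of the relations span a copy of every $V^\lambda$ with $\lambda_1>2a-2d$. I would do this by applying $\eta_{2a-2d+1}$ (symmetrizer on free vertices together with one matched vertex) and showing it annihilates the quotient, following the argument of \cite{MR4887799}. Even $\lambda$ with $\lambda_1=2a-2d$ appear exactly once in $h_d[h_2]h_{2a-2d}$ by Pieri and \eqref{eq:h_a[h_2]}, so the upper bound is $\sum_{\lambda \text{ even},\,\lambda_1=2a-2d}s_\lambda$ in degree $d$.

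For the third step, summing over $d$, the upper bound has total dimension equal to $\sum_{\lambda\text{ even}}\dim V^\lambda=\dim\CC[\Pi_{(2^a)}]=\dim R(\Pi_{(2^a)})$, by \eqref{eq:grad-isom}, Lemma~\ref{lem:plethysm-set-partition} and \eqref{eq:h_a[h_2]}. The surjection $\CC\Big[\xxxgraph{2a}\Big]/I\twoheadrightarrow R(\Pi_{(2^a)})$ therefore forces equality in each degree. This gives both $I=\gr\II(\Pi_{(2^a)})$ and the graded Frobenius formula with $d=(2a-\lambda_1)/2$.
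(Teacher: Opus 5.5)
\textbf{There is a gap in Step 2.} Your Steps 1 and 3 are correct. The skeleton of Step 2 is also sound: matching monomials span degree $d$, giving the bound $h_d[h_2]\,h_{2a-2d}$; you truncate to $\lambda_1\le 2a-2d$; Pieri then leaves exactly the even $\lambda$ with $\lambda_1=2a-2d$. This is the scheme the paper uses for Theorem~\ref{thm:generalize-h_a[h_2]}, with your dimension count in place of the restriction to $\symm_{n-1}$.

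The problem is the one nontrivial input: that every $V^\lambda$ in $(\CC[\xxxgraph{2a}]/I)_d$ has $\lambda_1\le 2a-2d$. You announce it rather than prove it, and what you do say does not suffice.

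The computation you have in hand is this. For a $d$-matching $\tau$ with free set $F$ and an edge $\{v,v'\}\in\tau$, set $U=F\cup\{v\}$. Then $\eta_U\mmm(\tau)=(2a-2d)!\sum_{u\in U}x_{\{u,v'\}}\,\mmm(\tau\setminus\{\{v,v'\}\})$, which lies in $I$ by the linear generator at $v'$ together with the products. Modulo the products, these elements span all of $I_d$. But here $U$ depends on $\tau$.

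Lemma~\ref{lem:row-length} needs one fixed symmetrizer $\eta_j$, with $j=2a-2d+1$, to kill $\mmm(\tau)$ for \emph{every} $d$-matching $\tau$. Equivalently, you need $\eta_U\mmm(\tau_0)\in I$ for every $j$-subset $U$. This includes subsets containing several whole edges and several half-edges of $\tau_0$ while missing free vertices. Knowing $\eta_{U_0}v_0=0$ for one cyclic generator $v_0$ does not let you invoke Lemma~\ref{lem:row-length}.

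Your suggested reformulation is also false. The piece $\{h_d[h_2]h_{2a-2d}\}_{\lambda_1>2a-2d}$ is not $h_{d-1}[h_2]h_{2a-2d+2}$: for $a=2$, $d=1$ these are $s_{(4)}+s_{(3,1)}$ and $s_{(4)}$ respectively.

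\textbf{How to close it.} The missing step is exactly Lemma~\ref{lem:generalize-length}, and its proof is short.
Let $S$ be the vertex set of $\tau$ and $T=S\cap[j]$. Then $\eta_j\mmm(\tau)$ is a positive multiple of a sum over injections $f:T\hookrightarrow[j]$.
Modulo the products, this sum factors as a product, over the edges of $\tau$ meeting $[j]$, of sums over $[j]$.
The linear generators replace each factor by $\pm$ the corresponding sum over $[2a]\setminus[j]$.
Re-expanding modulo the products leaves a sum over injections $T\hookrightarrow([2a]\setminus[j])\setminus(S\setminus T)$. That target set has size $2a-j-2d+|T|<|T|$, so there are no such injections and the sum is zero.

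With that lemma your argument becomes a complete, self-contained proof.

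\textbf{Comparison with the paper.} The paper's proof of this theorem avoids the issue entirely. It notes that $I$ is the image of the ideal $J_a$ of \cite[Proposition 4.5]{MR4887799} under $x_{i,j},x_{j,i}\mapsto x_{\{i,j\}}$ and $x_{i,i}\mapsto 0$. Hence $\CC[\xxxgraph{2a}]/I\cong R(\PM_{2a})$, whose graded character is \cite[Theorem 4.4]{MR4887799}. It then runs your Step 3 at $q=1$. Your route buys independence from the matrix-locus results, but only once the annihilation lemma is actually supplied.
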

\begin{proof}
    Let $I_a\subseteq\CC\Big[\xxxgraph{2a}\Big]$ be the ideal generated by the two families of generators above. We first show that
    \begin{align}\label{eq:contain_h_a[h_2]}
        I_a\subseteq\gr\II(\Pi_{(2^a)}).
    \end{align}
    For the first type of generators $x_S\cdot x_T$ with $S\cap T\neq \varnothing$, if $S\neq T$ then we have that $x_S\cdot x_T\in\II(\Pi_{(2^a)})$ because any two distinct blocks of the same unordered set partition are disjoint. If $S=T$, we have that $x_S\cdot x_T-x_S = x_S\cdot (x_S -1)\in\II(\Pi_{(2^a)})$ because all the components of any $\zzz\in\Pi_{(2^a)}$ take values in $\{0,1\}$. Regardless of whether $S$ equals $T$, we have that $x_S\cdot x_T\in\gr\II(\Pi_{(2^a)})$. For the second type of generators $\sum_{\substack{j\in[2a] \\ j\neq i}} x_{\{i,j\}}$, we deduce that $\sum_{\substack{j\in[2a] \\ j\neq i}} x_{\{i,j\}} -1 \in \II(\Pi_{(2^a)})$ from the fact that for any $\pi\in\Pi_{(2^a)}$ each $i\in[2a]$ belongs to exactly one block of $\pi$. It follows that $\sum_{\substack{j\in[2a] \\ j\neq i}} x_{\{i,j\}}\in\gr\II(\Pi_{(2^a)})$. Thus, the containment \eqref{eq:contain_h_a[h_2]} is true.

    An immediate corollary of \eqref{eq:contain_h_a[h_2]} is the natural graded $\symm_{2a}$-module surjection $\CC\Big[\xxxgraph{2a}\Big]\Big/I_a\twoheadrightarrow R(\Pi_{(2^a)})$, indicating that
    \begin{align}\label{ineq:h_a[h_2]}
        \grFrob\Big(\CC\Big[\xxxgraph{2a}\Big]\Big/I_a;q\Big)\ge\grFrob(R(\Pi_{(2^a)});q).
    \end{align}
    Our goal is to force this inequality to be an equality, and thus force the containment \eqref{eq:contain_h_a[h_2]} to be an equality.

    Let $\xxx_{n\times n}$ be the variable matrix $(x_{i,j})_{1\le i,j\le n}$ (here $x_{i,j}$ is indexed by an ordered pair $(i,j)$, which is distinct from $x_{\{i,j\}}$). Let $\PM_{2a}\subseteq\Mat_{(2a)\times (2a)}(\CC)$ be the locus of all the permutation matrices corresponding to involutions without fixed points. According to \cite[Proposition 4.5]{MR4887799}, we have that
    \begin{align}\label{eq:pm_presentation}
        R(\PM_{2a}) = \CC[\xxx_{(2a)\times (2a)}]/J_a
    \end{align}
    where $J_a\subseteq\CC[\xxx_{(2a)\times (2a)}]$ is the ideal generated by
    \begin{itemize}
        \item all products $x_{i,j}\cdot x_{i,j^\prime}$ and $x_{i,j}\cdot x_{i^\prime,j}$ for $i,i^\prime,j,j^\prime\in[2a]$,
        \item all sums $\sum_{j=1}^{2a} x_{i,j}$ for $i\in[2a]$,
        \item all sums $\sum_{i=1}^{2a} x_{i,j}$ for $j\in[2a]$,
        \item all differences $x_{i,j}-x_{j,i}$ for $i,j\in[2a]$, and
        \item all diagonal variables $x_{i,i}$ for $i\in[2a]$.
    \end{itemize}
    It is clear that $\CC\Big[\xxxgraph{2a}\Big]\Big/I_a \cong \CC[\xxx_{(2a)\times(2a)}]/J_a$ as $\symm_{2a}$-modules, since the ideal $J_a$ kills diagonal variables $x_{i,i}$ and identifies $x_{i,j}$ with $x_{j,i}$. Therefore, the presentation \eqref{eq:pm_presentation} reveals that
    \[\CC\Big[\xxxgraph{2a}\Big]\Big/I_a \cong R(\PM_{2a})\]
    and hence
    \begin{align}\label{eq:h_a[h_2]_frob_equal}\grFrob\Big(\CC\Big[\xxxgraph{2a}\Big]\Big/I_a;q\Big) = \grFrob(R(\PM_{2a});q) = \sum_{\substack{\lambda\vdash 2a \\ \text{$\lambda$ is even}}} q^{\frac{2a-\lambda_1}{2}} \cdot s_{\lambda}\end{align}
    where the last equal sign arises from \cite[Theorem 4.4]{MR4887799}. Now we substitute $q=1$ in Equation~\eqref{eq:h_a[h_2]_frob_equal} and obtain
    \[\grFrob\Big(\CC\Big[\xxxgraph{2a}\Big]\Big/I_a;1\Big) = \sum_{\substack{\lambda\vdash 2a \\ \text{$\lambda$ is even}}}s_{\lambda} = h_a[h_2]=\Frob(R(\Pi_{(2^a)})) = \grFrob(R(\Pi_{(2^a)});1),\]
    which forces the inequality \eqref{ineq:h_a[h_2]} to an equality, and thus forces the containment \eqref{eq:contain_h_a[h_2]} to be an equality. Therefore, combining the equality $\gr\II(\Pi_{(2^a)}) =  I_a$ and Equation~\eqref{eq:h_a[h_2]_frob_equal} concludes the proof.
\end{proof}

With the generating set in Theorem~\ref{thm:grfrob_h_a[h_2]}, we are able to analyze the special case $b=2$ for Conjecture~\ref{conj:contain}.
\begin{theorem}\label{thm:contain_b=2}
    For all positive integers $a$, we have that
    \[\gr\II(\Pi_{(2^a)})\subseteq\gr\II(\Pi_{(a^2)}).\]
\end{theorem}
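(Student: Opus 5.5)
The plan is to check that every generator of $\gr\II(\Pi_{(2^a)})$ listed in Theorem~\ref{thm:grfrob_h_a[h_2]} lies in $\gr\II(\Pi_{(a^2)})$. Since that theorem gives an explicit generating set and $\gr\II(\Pi_{(a^2)})$ is an ideal, this is enough. For each generator $g$, I will exhibit a polynomial in $\II(\Pi_{(a^2)})$ whose top-degree part is $g$. Alternatively, I will show that $g$ is a combination of such top-degree parts.

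\emph{Degenerate case and linear generators.} The case $a=1$ is degenerate: $\Pi_{(1^2)}$ is the single point $z_{\{1,2\}}=0$, so $x_{\{1,2\}}\in\II(\Pi_{(1^2)})$ and the ideal $\gr\II(\Pi_{(1^2)})$ is everything in positive degree. Assume from now on that $a\ge 2$. For the linear generators, every $i\in[2a]$ lies in a block of size exactly $a$ for each $\pi\in\Pi_{(a^2)}$. Hence $\sum_{j\neq i}x_{\{i,j\}}-(a-1)\in\II(\Pi_{(a^2)})$, and its top-degree part is the required row sum. For squares $x_S^2$, I use $x_S(x_S-1)\in\II(\Pi_{(a^2)})$, exactly as in the proof of Theorem~\ref{thm:grfrob_h_a[h_2]}.

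\emph{Mixed products.} This is the main step: $x_{\{i,j\}}x_{\{i,k\}}$ with $i,j,k$ distinct. The products $x_{\{i,j\}}x_{\{i,k\}}$ are not in $\II(\Pi_{(a^2)})$ itself, since three elements can share a block, so I will use two observations about triples.

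First, on the locus $x_{\{i,j\}}x_{\{i,k\}}=1$ holds if and only if $i,j,k$ lie in a common block, and likewise for $x_{\{i,j\}}x_{\{j,k\}}$ and $x_{\{i,k\}}x_{\{j,k\}}$. So the homogeneous differences
\[x_{\{i,j\}}x_{\{i,k\}}-x_{\{i,j\}}x_{\{j,k\}},\qquad x_{\{i,j\}}x_{\{i,k\}}-x_{\{i,k\}}x_{\{j,k\}}\]
lie in $\II(\Pi_{(a^2)})$, hence in $\gr\II(\Pi_{(a^2)})$. Consequently the three products agree modulo $\gr\II(\Pi_{(a^2)})$.

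Second, there are only two blocks, so by pigeonhole $\sigma\coloneqq x_{\{i,j\}}+x_{\{i,k\}}+x_{\{j,k\}}$ takes only the values $1$ (exactly two of the three share a block) or $3$ (all three share a block). Therefore $(\sigma-1)(\sigma-3)\in\II(\Pi_{(a^2)})$, and its top-degree part is $\sigma^2\in\gr\II(\Pi_{(a^2)})$. Expanding $\sigma^2$ and removing the squares, which are already in the ideal, gives
\[x_{\{i,j\}}x_{\{i,k\}}+x_{\{i,j\}}x_{\{j,k\}}+x_{\{i,k\}}x_{\{j,k\}}\in\gr\II(\Pi_{(a^2)}).\]
Combined with the first observation, this says $3x_{\{i,j\}}x_{\{i,k\}}\in\gr\II(\Pi_{(a^2)})$. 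Since $3$ is invertible over $\CC$, we get $x_{\{i,j\}}x_{\{i,k\}}\in\gr\II(\Pi_{(a^2)})$.

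The only real obstacle is this last family of generators. Here the naive vanishing argument fails, and one must find the right quadratic relation. The pigeonhole relation $(\sigma-1)(\sigma-3)$ is the key; it is exactly where the hypothesis that there are only two blocks (the case $b=2$) is used.
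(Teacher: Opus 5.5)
Your proof is correct. It follows the paper's overall strategy: you verify the generating set from Theorem~\ref{thm:grfrob_h_a[h_2]}, and you treat the linear sums and the squares exactly as the paper does. The difference is in the key step, the mixed products $x_{\{i,j\}}x_{\{i,k\}}$.

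The paper uses one inhomogeneous relation per pair, $(x_{\{i,j\}}-x_{\{i,k\}})^2-(1-x_{\{j,k\}})\in\II(\Pi_{(a^2)})$. This holds because when $j,k$ lie in different blocks, $i$ joins exactly one of them. Hence $(x_{\{i,j\}}-x_{\{i,k\}})^2\in\gr\II(\Pi_{(a^2)})$, and polarization against the squares finishes the argument, dividing only by $2$.

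You instead split the input into two parts:
\begin{itemize}
\item the homogeneous triangle differences, which hold on every locus of set partitions (they are exactly the second family of generators of $I_{n,m}$ in Definition~\ref{def:general-ideal});
\item the pigeonhole fact $\sigma\in\{1,3\}$, which is where the two-block hypothesis enters.
\end{itemize}
You then use the symmetry of the triangle, at the cost of also inverting $3$. Your decomposition shows exactly where $b=2$ is used; the paper's route is shorter.

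The two routes are closely related. On the locus, $\sigma-1=2x_{\{i,j\}}x_{\{i,k\}}$. So $x_{\{i,j\}}+x_{\{i,k\}}+x_{\{j,k\}}-1-2x_{\{i,j\}}x_{\{i,k\}}$ already lies in $\II(\Pi_{(a^2)})$, with top-degree part $-2x_{\{i,j\}}x_{\{i,k\}}$. The paper's $f$ equals this polynomial plus $x_{\{i,j\}}(x_{\{i,j\}}-1)+x_{\{i,k\}}(x_{\{i,k\}}-1)$. Your relation $(\sigma-1)(\sigma-3)$ has top-degree part $\sigma^2$, which only yields the symmetric sum of the three products. That is why you need the triangle relations to isolate a single product.

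Two minor remarks, neither affecting correctness:
\begin{itemize}
\item Your separate treatment of $a=1$ is unnecessary. The linear-sum argument with $a-1=0$ already covers it, and the mixed case is vacuous.
\item For $a=2$ your aside is inaccurate: the products $x_{\{i,j\}}x_{\{i,k\}}$ do lie in $\II(\Pi_{(2^2)})$ itself, since no block contains three elements.
\end{itemize}
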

\begin{proof}
    It suffices to show that $\gr\II(\Pi_{(a^2)})$ contains the two families of generators mentioned in Theorem~\ref{thm:grfrob_h_a[h_2]}:
    \begin{itemize}
    \item all sums $\sum_{\substack{j\in[2a] \\ j\neq i}} x_{\{i,j\}}$ for $i\in[2a]$, and
        \item all products $x_S\cdot x_T$ for $S,T\in\binom{[2a]}{2}$ with $S\cap T\neq\varnothing$ ($S$ may equal $T$).
    \end{itemize}
    
    For the first type of generators $\sum_{\substack{j\in[2a] \\ j\neq i}} x_{\{i,j\}}$, we note that for any $\pi\in\Pi_{(a^2)}$ the integer $i$ is contained in exactly one block of $\pi$ and that each block of $\pi$ totally has $a$ elements. This observation indicates that
    \[\sum_{\substack{j\in[2a] \\ j\neq i}} x_{\{i,j\}} - (a-1) \in \II(\Pi_{(a^2)})\]
    and hence
    \[\sum_{\substack{j\in[2a] \\ j\neq i}} x_{\{i,j\}} \in \gr\II(\Pi_{(a^2)}).\]

    For the second type of generators $x_S\cdot x_T$ with $S\cap T\neq\varnothing$, we regard two cases depending on whether $S$ equals $T$. If $S=T$, the fact that all points in the locus $\Pi_{(a^2)}$ are $0$-$1$ vectors indicates that
    \[x_S\cdot x_T-x_S = x_S\cdot(x_S-1)\in\II(\Pi_{(a^2)}),\]
    yielding that
    \[x_S\cdot x_T\in\gr\II(\Pi_{(a^2)}).\]
    If $S\neq T$, we write $S=\{i,j\}$ and $T=\{i,k\}$ where $i,j,k\in[2a]$ are pairwise distinct. Consider
    \[f\coloneqq (x_S-x_T)^2-(1-x_{\{j,k\}})=g-h\]
    where $g=(x_S-x_T)^2,h=1-x_{\{j,k\}}$. 
    We claim that $f\in\II(\Pi_{(a^2)})$. In fact, for any unordered set partition $\pi\in\Pi_{(a^2)}$, we only need to focus on whether $j,k$ belong to the same block of $\pi$. Note that
    \[g(\pi) = \begin{cases}
        0^2 = 0, &\text{if $j,k$ belongs to the same block of $\pi$}\\
        (\pm1)^2=1, &\text{otherwise}
    \end{cases}\]
    and that
    \[h(\pi) = \begin{cases}
        1-1=0, &\text{if $j,k$ belongs to the same block of $\pi$}\\
        1-0=1, &\text{otherwise}
    \end{cases}.\]
    Consequently, we have that $f(\pi)=g(\pi)-h(\pi)=0$ for any $\pi\in\Pi_{(a^2)}$, revealing that $f\in\II(\Pi_{(a^2)})$. As a result, we deduce that
    $(x_S-x_T)^2\in\gr\II(\Pi_{(a^2)})$.
    As proved before, we have that $x_S^2,x_T^2\in\gr\II(\Pi_{(a^2)})$, from which it follows that $x_S\cdot x_T = \frac{x_S^2+x_T^2-(x_S-x_T)^2}{2}\in\gr\II(\Pi_{(a^2)})$, completing the proof.
\end{proof}
As Theorem~\ref{thm:contain_b=2} solves the special case $b=2$ of Conjecture~\ref{conj:contain}, it provides an alternative proof of the special case $h_a[h_2]\ge h_2[h_a]$ of Foulkes' conjecture. In addition, Theorem~\ref{thm:contain_b=2} yields the graded $\symm_{2a}$-module structure of $R(\Pi_{(a^2)})$, generating a graded refinement of $h_2[h_a]$ as follows.

\begin{theorem}\label{thm:grad_frob_h_2[h_a]}
    For $a\ge 1$, we have that
    \[\grFrob(R(\Pi_{(a^2)});q) = \sum_{d=0}^{\lfloor a/2\rfloor}q^d\cdot s_{(2a-2d,2d)}.\]
\end{theorem}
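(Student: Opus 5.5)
By Theorem~\ref{thm:contain_b=2}, there is a graded $\symm_{2a}$-module surjection $R(\Pi_{(2^a)})\twoheadrightarrow R(\Pi_{(a^2)})$. Degree by degree this gives
\[\Frob(R(\Pi_{(a^2)})_d)\le\Frob(R(\Pi_{(2^a)})_d)=\sum_{\substack{\lambda\vdash 2a,\ \lambda\text{ even}\\ \lambda_1=2a-2d}} s_\lambda,\]
where the equality is Theorem~\ref{thm:grfrob_h_a[h_2]}. Ungraded, Lemma~\ref{lem:plethysm-set-partition} and \eqref{eq:h_2[h_b]} give $\Frob(R(\Pi_{(a^2)}))=h_2[h_a]=\sum_{d=0}^{\lfloor a/2\rfloor}s_{(2a-2d,2d)}$, which is multiplicity free. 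So each $s_{(2a-2d,2d)}$ occurs exactly once in $R(\Pi_{(a^2)})$, in some degree $e$, and it must be a constituent of $R(\Pi_{(2^a)})_e$. The only such degree is $e=(2a-\lambda_1)/2=d$. Hence $s_{(2a-2d,2d)}$ sits in degree exactly $d$, and summing over $d$ gives the stated formula.

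The one point to check is that the Frobenius inequality is really degreewise. This holds because $\gr\II(\Pi_{(2^a)})\subseteq\gr\II(\Pi_{(a^2)})$ is a containment of homogeneous $\symm_{2a}$-stable ideals. The induced map of quotients is therefore graded and equivariant, and each graded piece of $R(\Pi_{(a^2)})$ is an $\symm_{2a}$-quotient of the corresponding piece of $R(\Pi_{(2^a)})$. Semisimplicity then gives the degreewise inequality.

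The remaining facts are immediate. The partitions $(2a-2d,2d)$ with $0\le d\le\lfloor a/2\rfloor$ are even. Also, $\Frob(R(\Pi_{(a^2)}))=h_2[h_a]$ holds because orbit harmonics preserves the ungraded module, by \eqref{eq:grad-isom}.

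As a consistency check one could appeal to Lemma~\ref{lem:row-length} or compute degree $0$ and degree $1$ directly. This is not needed: the argument is entirely formal once Theorem~\ref{thm:contain_b=2} is available, and the only real input is the multiplicity-freeness of $h_2[h_a]$ together with the fact that the grading in Theorem~\ref{thm:grfrob_h_a[h_2]} is determined by $\lambda_1$.
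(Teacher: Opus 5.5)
Your proof is correct and takes the same approach as the paper: the graded surjection $R(\Pi_{(2^a)})\twoheadrightarrow R(\Pi_{(a^2)})$ from Theorem~\ref{thm:contain_b=2}, combined with the fact that the grading in Theorem~\ref{thm:grfrob_h_a[h_2]} is determined by $\lambda_1$, places every constituent of $h_2[h_a]$ in degree $(2a-\lambda_1)/2$. The paper writes this as the truncation $\Frob(R(\Pi_{(a^2)})_d)=\{h_2[h_a]\}_{\lambda_1=2a-2d}$, which shows your appeal to multiplicity-freeness is harmless but unnecessary.
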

\begin{proof}
    Theorem~\ref{thm:contain_b=2} indicates a graded $\symm_{2a}$ surjection
    \[R(\Pi_{(2^a)})\twoheadrightarrow R(\Pi_{(a^2)}).\]
    Furthermore, Theorem~\ref{thm:grfrob_h_a[h_2]} indicates that for $d\ge 0$ any Specht module $V^\lambda$ occurring in $R(\Pi_{(2^a)})_d$ satisfies that $\lambda_1=2a-2d$. Consequently, the graded surjection above forces every Specht module $V^\lambda$ occurring in $R(\Pi_{(a^2)})_d$ to also satisfy that $\lambda_1=2a-2d$. It follows that
    \[\Frob(R(\Pi_{(a^2)})_d)=\{\Frob(R(\Pi_{(a^2)}))\}_{\lambda_1=2a-2d} = \{h_2[h_a]\}_{\lambda_1=2a-2d} = \begin{cases}
        s_{(2a-2d,2d)}, &\text{if $2a-2d \ge 2d$} \\
        0, &\text{otherwise}
    \end{cases}\]
    where the last equal sign arises from Equation~\eqref{eq:h_2[h_b]}, so the proof is complete.
\end{proof}

Interestingly, Theorem~\ref{thm:grad_frob_h_2[h_a]} means that each graded direct summand of $R(\Pi_{(a^2)})$ is irreducible as an $\symm_{2a}$-module. In other words, for $0\le d\le \lfloor a/2 \rfloor$ the graded direct summand $R(\Pi_{(a^2)})_d$ provides another model for the Specht module $V^{(2a-2d,2d)}$, which realizes $V^{(2a-2d,2d)}$ as a quotient of $\CC\Big[\xxxgraph{2a}\Big]_d$. To better understand this quotient, we find a concise generating set for the defining ideal $\gr\II(\Pi_{(a^2)})$.

\begin{proposition}\label{prop:ideal_h_2[h_a]}
    For $a\ge 1$, the defining ideal $\gr\II(\Pi_{(a^2)})$ of $R(\Pi_{(a^2)})$ is generated by
    \begin{itemize}
        \item all products $x_S\cdot x_T$ for $S,T\in\binom{[2a]}{2}$ with $S\cap T\neq\varnothing$ ($S$ may equal $T$),
        \item all sums $\sum_{\substack{j\in[2a] \\ j\neq i}} x_{\{i,j\}}$ for $i\in[2a]$, and
        \item all differences $x_{\{i,j\}}\cdot x_{\{k,l\}} - x_{\{i,k\}}\cdot x_{\{j,l\}}$ for pairwise distinct $i,j,k,l\in[2a]$.
    \end{itemize}
    Equivalently (see the generating set of $\gr\II(\Pi_{(2^a)})$ in Theorem~\ref{thm:grfrob_h_a[h_2]}), we have that
    \[\gr\II(\Pi_{(a^2)}) = \gr\II(\Pi_{(2^a)})+\langle x_{\{i,j\}}\cdot x_{\{k,l\}} - x_{\{i,k\}}\cdot x_{\{j,l\}} \,:\, \text{$i,j,k,l\in[2a]$ are pairwise distinct}\rangle,\]
    which refines Theorem~\ref{thm:contain_b=2}.
\end{proposition}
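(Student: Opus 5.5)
Let $K_a$ be the ideal generated by the three families in the statement. By Theorem~\ref{thm:grfrob_h_a[h_2]}, the first two families generate $\gr\II(\Pi_{(2^a)})$, so $K_a=\gr\II(\Pi_{(2^a)})+D_a$, where $D_a$ is the ideal generated by the differences. The plan mirrors the proof of Theorem~\ref{thm:grfrob_h_a[h_2]}. First show $K_a\subseteq\gr\II(\Pi_{(a^2)})$. This gives a graded surjection $\CC\Big[\xxxgraph{2a}\Big]\Big/K_a\twoheadrightarrow R(\Pi_{(a^2)})$. Then show that each graded piece of the quotient is at most the corresponding piece $s_{(2a-2d,2d)}$ from Theorem~\ref{thm:grad_frob_h_2[h_a]}, which forces equality of ideals.

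\textbf{Containment.} By Theorem~\ref{thm:contain_b=2} it suffices to treat the differences. Fix distinct $i,j,k,l$. On $\Pi_{(a^2)}$ there are only two blocks, so ``same block'' is an equivalence relation with two classes. Set $y_{\{p,q\}}=2x_{\{p,q\}}-1$, which takes the value $\pm1$ and equals $\epsilon_p\epsilon_q$ for a sign labelling $\epsilon$ of the two blocks. Then $y_{\{i,j\}}y_{\{k,l\}}=\epsilon_i\epsilon_j\epsilon_k\epsilon_l=y_{\{i,k\}}y_{\{j,l\}}$ on the locus. Hence
\[(2x_{\{i,j\}}-1)(2x_{\{k,l\}}-1)-(2x_{\{i,k\}}-1)(2x_{\{j,l\}}-1)\in\II(\Pi_{(a^2)}),\]
and its top-degree part is $4(x_{\{i,j\}}x_{\{k,l\}}-x_{\{i,k\}}x_{\{j,l\}})$, as desired.

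\textbf{Dimension bound.} Since $\gr\II(\Pi_{(2^a)})\subseteq K_a$, Theorem~\ref{thm:grfrob_h_a[h_2]} shows that the degree-$d$ piece of $\CC\Big[\xxxgraph{2a}\Big]\Big/K_a$ is a quotient of a module whose Specht constituents all have $\lambda_1=2a-2d$ with $\lambda$ even. It then suffices to prove that $(\CC[\xxxgraph{2a}]/K_a)_d$ has dimension at most $\dim V^{(2a-2d,2d)}$, or at least that it only contains two-row shapes with multiplicity one.

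Modulo $K_a$, a nonzero degree-$d$ monomial must be a matching monomial $x_{S_1}\cdots x_{S_d}$ with the $S_i$ pairwise disjoint. The differences say that, for any two matching edges, the monomial depends only on the four vertices and not on how they are paired; by Remark~\ref{rmk:interchange-index} the pairings $\{i,j\},\{k,l\}$ and $\{i,k\},\{j,l\}$, and hence all three pairings, become equal. Applying these relations edge-pair by edge-pair shows that every matching monomial on a fixed $2d$-set $U$ is congruent to a single element $m_U$. Thus the degree-$d$ piece is spanned by $\{m_U : U\in\binom{[2a]}{2d}\}$, so it is a quotient of the permutation module $M^{(2a-2d,2d)}$, whose Frobenius image is $h_{2a-2d}h_{2d}$. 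The constituents of $h_{2a-2d}h_{2d}$ are $s_{(2a-j,j)}$ with $j\le\min(2d,2a-2d)$, each with multiplicity one. Intersecting with the constraint $\lambda_1=2a-2d$ leaves at most $s_{(2a-2d,2d)}$, and only when $2d\le a$.

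Summing over $d$ gives $\grFrob(\CC[\xxxgraph{2a}]/K_a;q)\le\sum_{d\le\lfloor a/2\rfloor}q^d s_{(2a-2d,2d)}$. Combined with the surjection and Theorem~\ref{thm:grad_frob_h_2[h_a]}, equality holds in each degree, so $K_a=\gr\II(\Pi_{(a^2)})$.

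The main obstacle is the spanning step. One must check that the three-term pairing relations connect all perfect matchings of $U$. They do, because swapping partners between any two edges generates transitively, as in the usual connectivity of the matching graph. One must also check that the Gröbner-free constituent argument is not undermined: the map from $M^{(2a-2d,2d)}$ is well defined simply because it is a spanning surjection, so no relation checking on $m_U$ is needed.
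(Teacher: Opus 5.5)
Your proof is correct and follows the paper's proof essentially step for step. You first get containment from a degree-$2$ element of $\II(\Pi_{(a^2)})$; your sign-labelling with $y=2x-1$ is just a cleaner verification of the paper's $(x-\tfrac12)(x-\tfrac12)$ identity, which the paper checks by the $4+0$, $3+1$, $2+2$ case split. Then, as in the paper, the matching-monomial spanning set collapses to a quotient of $\CC\bigl[\binom{[2a]}{2d}\bigr]$ with Frobenius image $h_{2d}h_{2a-2d}$, which you truncate to $\lambda_1=2a-2d$ via Theorem~\ref{thm:grfrob_h_a[h_2]} and compare with Theorem~\ref{thm:grad_frob_h_2[h_a]}.
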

\begin{proof}
    Let $K_a\subseteq\CC\Big[\xxxgraph{2a}\Big]$ be the ideal generated by the three families of polynomials above. That is,
    \[K_a=\gr\II(\Pi_{(2^a)})+\langle x_{\{i,j\}}\cdot x_{\{k,l\}} - x_{\{i,k\}}\cdot x_{\{j,l\}} \,:\, \text{$i,j,k,l\in[2a]$ are pairwise distinct}\rangle.\]

    We first show that
    \begin{align}\label{eq:contain_h_2[h_a]}
        K_a\subseteq\gr\II(\Pi_{(a^2)}).
    \end{align}
    As we have shown that $\gr\II(\Pi_{(2^a)})\subseteq\gr\II(\Pi_{(a^2)})$ in Theorem~\ref{thm:contain_b=2}, it remains to show that
    \[x_{\{i,j\}}\cdot x_{\{k,l\}} - x_{\{i,k\}}\cdot x_{\{j,l\}}\in\gr\II(\Pi_{(a^2)})\]
    for pairwise distinct $i,j,k,l\in[2a]$. This fact immediately follows from the observation
    \[\Big(x_{\{i,j\}}-\frac{1}{2}\Big)\Big(x_{\{k,l\}}-\frac{1}{2}\Big)-\Big(x_{\{i,k\}}-\frac{1}{2}\Big)\Big(x_{\{j,l\}}-\frac{1}{2}\Big)\in\II(\Pi_{(a^2)})\]
    which can be verified by arbitrarily choosing $\pi\in\Pi_{(a^2)}$ and then considering all the three possibilities about how $\pi$ divides $\{i,j,k,l\}$: $4+0,3+1,2+2$. Therefore, the containment \eqref{eq:contain_h_2[h_a]} holds.

    Our final goal is to force the containment \eqref{eq:contain_h_2[h_a]} to be an equality. Note that \eqref{eq:contain_h_2[h_a]} naturally induces a graded $\symm_{2a}$-module surjection
    \[\CC\Big[\xxxgraph{2a}\Big]\Big/K_{a}\twoheadrightarrow R(\Pi_{(a^2)}).\]
    It suffices to force this surjection to be an isomorphism. That is, we only need to show that
    \begin{align}\label{ineq:h_2[h_a]_force}\Frob\Big(\Big(\CC\Big[\xxxgraph{2a}\Big]\Big/K_{a}\Big)_d\Big) \le \Frob(R(\Pi_{(a^2)})_d)\end{align}
    for all $d\ge 0$. A straightforward observation is that $\Big(\CC\Big[\xxxgraph{2a}\Big]\Big/K_{a}\Big)_d$ is spanned by the family of monomials modulo $K_a$
    \begin{align}\label{eq:h_2[h_a]_span}
        \Bigg\{\prod_{i=1}^d x_{S_i}\,:\,\text{$S_1,\dots,S_d\in\binom{[2a]}{2}$ are pairwise disjoint}\Bigg\},
    \end{align}
    which follows from the fact that all the elements $x_S\cdot x_T\in\gr\II(\Pi_{(2^a)})\subseteq K_a$ for $S,T\in\binom{[2a]}{2}$ with $S\cap T\neq\varnothing$ ($S$ may equal $T$) kill all monomials not belonging to \eqref{eq:h_2[h_a]_span}. We further note that $x_{\{i,j\}}\cdot x_{\{k,l\}} - x_{\{i,k\}}\cdot x_{\{j,l\}}\in K_a$ can help us identifies two monomials $x_{S_1}\cdot x_{S_2}\cdot x_{S_3}\cdots x_{S_d}$ and $x_{S_1^\prime}\cdot x_{S_2^\prime}\cdot x_{S_3}\cdots x_{S_d}$ modulo $K_a$ with $S_1\cup S_2 = S_1^\prime\cup S_2^\prime$. Iterating this identification yields
    \begin{align}\label{eq:h_2[h_a]_identification}
        \prod_{i=1}^d x_{S_i} \equiv \prod_{i=1}^d x_{T_i} \mod{K_a}
    \end{align}
    for all pairwise disjoint sets $S_1,\dots,S_d\in\binom{[2a]}{2}$ and pairwise disjoint sets $T_1,\dots,T_d\in\binom{[2a]}{2}$ such that $\bigcup_{i=1}^d S_i = \bigcup_{i=1}^d T_i$. The spanning set \eqref{eq:h_2[h_a]_span} and the identification \eqref{eq:h_2[h_a]_identification} indicates an $\symm_n$-module surjection
    \[\CC\Big[\binom{[2a]}{2d}\Big]\twoheadrightarrow \Big(\CC\Big[\xxxgraph{2a}\Big]\Big/K_{a}\Big)_d,\]
    which yields
    \begin{align}\label{ineq:force-h_2[h_a]-frob}\Frob\Big(\Big(\CC\Big[\xxxgraph{2a}\Big]\Big/K_{a}\Big)_d\Big)\le\Frob\Big(\CC\Big[\binom{[2a]}{2d}\Big]\Big) \\\nonumber=\Frob(\mathbf{1}_{\symm_{2d}}\circ\mathbf{1}_{\symm_{2a-2d}}) = h_{2d}\cdot h_{2a-2d}.\end{align}
    Furthermore, the containment $\gr\II(\Pi_{(2^a)})\subseteq K_a$ reveals that \[\Frob\Big(\Big(\CC\Big[\xxxgraph{2a}\Big]\Big/K_{a}\Big)_d\Big) \le \Frob(R(\Pi_{(2^a)})_d),\]
    combining which with Theorem~\ref{thm:grfrob_h_a[h_2]} indicates that each Specht module $V^\lambda$ occurring in the $\symm_{2a}$-module $\Big(\CC\Big[\xxxgraph{2a}\Big]\Big/K_{a}\Big)_d$ satisfies that $\lambda_1=2a-2d$. This $\lambda_1$-restriction together with Inequality~\eqref{ineq:force-h_2[h_a]-frob} means that
    \[\Frob\Big(\Big(\CC\Big[\xxxgraph{2a}\Big]\Big/K_{a}\Big)_d\Big) \le \{h_{2d}\cdot h_{2a-2d}\}_{\lambda_1=2a-2d}=\begin{cases}
        s_{(2a-2d,2d)}, &\text{if $2d\le 2a-2d$} \\
        0, &\text{otherwise}
    \end{cases}\]
    where the last equal sign arises from the Pieri rule for Schur functions. Therefore, we finish the proof of \eqref{ineq:h_2[h_a]_force}, concluding the whole proof.
\end{proof}

\subsection{First-row length separation phenomena}\label{subsec:length-generalize}
In Subsection~\ref{subsec:b=2}, both $R(\Pi_{(2^a)})$ and $R(\Pi_{(a^2)})$ follows ``first-row length separation phenomena''. Specifically, $\grFrob(R(\Pi_{(2^a)});q)$ separates the Schur expansion of $h_a[h_2]$ according to $\lambda_1$ (see Theorem~\ref{thm:grfrob_h_a[h_2]}), and $\grFrob(R(\Pi_{(a^2)});q)$ separates the Schur expansion of $h_2[h_a]$ according to $\lambda_1$ as well (see Theorem~\ref{thm:grad_frob_h_2[h_a]}). However, both $R(\Pi_{(2^a)})$ and $R(\Pi_{(a^2)})$ are quotients of $\CC\Big[\xxxgraph{2a}\Big]$. Now we want to extend their construction to $\CC\Big[\xxxgraph{n}\Big]$ where $n$ may be odd. We use the same defining ideals and will see that ``first-row length separation phenomena'' still hold (see Theorems~\ref{thm:generalize-h_a[h_2]} and \ref{thm:generalize-h_2[h_a]}).

\begin{definition}\label{def:IJ}
    For integers $n>0$, $I(n)\subseteq\CC\Big[\xxxgraph{n}\Big]$ is the ideal generated by
    \begin{itemize}
        \item all sums $\sum_{\substack{j\in[n]\\j\neq i}}x_{\{i,j\}}$ for $i\in[n]$, and
        \item all products $x_S\cdot x_T$ for $S,T\in\binom{[n]}{2}$ with $S\cap T\neq\varnothing$ ($S$ may equal $T$).
    \end{itemize}
    Let $J(n)\subseteq\CC\Big[\xxxgraph{n}\Big]$ be the ideal given by
    \[J(n)\coloneqq I(n) + \langle x_{\{i,j\}}\cdot x_{\{k,l\}} - x_{\{i,k\}}\cdot x_{\{j,l\}} \,:\,\text{$i,j,k,l\in[n]$ are pairwise distinct} \rangle.\]
\end{definition}

We will need strategic spanning sets of both quotients $\CC\Big[\xxxgraph{n}\Big]\Big/I(n)$ and $\CC\Big[\xxxgraph{n}\Big]\Big/J(n)$ to study their $\symm_n$-module structure. Our spanning sets consist of the following monomials.

\begin{definition}\label{def:mono}
    Let $\MMM_{n,d}$ be the set of \emph{$d$-matchings} given by
    \[\MMM_{n,d}\coloneqq\bigg\{\{S_1,\dots,S_d\}\,:\,\text{$S_1,\dots,S_d\in\binom{[n]}{2}$ are pairwise disjoint}\bigg\}.\]
    For $\tau\in\MMM_{n,d}$, we define the \emph{matching monomial} $\mmm(\tau)\in\CC\Big[\xxxgraph{n}\Big]$ by
    \[\mmm(\tau)\coloneqq\prod_{S\in\tau} x_{S}.\]
    For $S\in\binom{[n]}{2d}$, we write $S=\{i_1<i_2<\dots<i_{2d}\}$ and define the \emph{standard matching monomial} $\mmm(S)\in\CC\Big[\xxxgraph{n}\Big]$ by
    \[\mmm(S)\coloneqq\mmm(\tau)\]
    where $\tau=\{\{i_1,i_2\},\{i_3,i_4\},\dots,\{i_{2d-1},i_{2d}\}\}\in\MMM_{n,d}$. In particular, we have $\mmm(\varnothing) = 1$.
\end{definition}

\begin{example}\label{ex:mono}
    Let $\tau\in\MMM_{7,3}$ be the $3$-matching $\{\{2,4\},\{5,7\},\{3,6\}\}$. Then we have the matching monomial $\mmm(\tau)= x_{\{2,4\}}\cdot x_{\{5,7\}}\cdot x_{\{3,6\}}$. Let $S=\{2,3,4,5,6,7\}$. Then we have the standard matching monomial $\mmm(S) = x_{\{2,3\}}\cdot x_{\{4,5\}}\cdot x_{\{6,7\}}$.
\end{example}
The quotient $\CC\Big[\xxxgraph{n}\Big]\Big/I(n)$ possesses the following spanning set.
\begin{lemma}\label{lem:span-I}
    The family of matching monomials
    \[\bigsqcup_{d=0}^{\lfloor n/2\rfloor}\{\mmm(\tau)\,:\,\tau\in\MMM_{n,d}\}\]
    descends to a spanning set of $\CC\Big[\xxxgraph{n}\Big]\Big/I(n)$.
\end{lemma}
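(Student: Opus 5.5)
Since $\CC\Big[\xxxgraph{n}\Big]$ is spanned by monomials, it suffices to show that every monomial either lies in $I(n)$ or is itself a matching monomial $\mmm(\tau)$ for some $\tau\in\MMM_{n,d}$. This is a statement about monomials only, so only the second family of generators of $I(n)$ (the products $x_S\cdot x_T$ with $S\cap T\neq\varnothing$) is needed; the linear generators $\sum_{j\neq i}x_{\{i,j\}}$ play no role in spanning.

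Take an arbitrary monomial $x_{S_1}x_{S_2}\cdots x_{S_d}$ with $S_1,\dots,S_d\in\binom{[n]}{2}$, listed with repetition. We split into two cases.

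\begin{enumerate}
    \item Suppose some $S_p=S_q$ for $p\neq q$. Then the monomial is divisible by $x_{S_p}^2=x_{S_p}\cdot x_{S_p}$, which is a generator of $I(n)$ (the case $S=T$). Hence the monomial lies in $I(n)$.
    \item Suppose instead that the $S_p$ are distinct but some two of them intersect, say $S_p\cap S_q\neq\varnothing$. Then the monomial is divisible by the generator $x_{S_p}x_{S_q}$, so again it lies in $I(n)$.
\end{enumerate}

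In the remaining case the sets $S_1,\dots,S_d$ are pairwise disjoint. Then $\tau=\{S_1,\dots,S_d\}\in\MMM_{n,d}$ and the monomial equals $\mmm(\tau)$. Pairwise disjointness of $d$ two-element subsets of $[n]$ forces $2d\le n$, i.e.\ $d\le\lfloor n/2\rfloor$. The empty monomial $1=\mmm(\varnothing)$ covers $d=0$.

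Hence the images of the matching monomials span the quotient, and the union over $d$ is disjoint because the $d$-th family consists of monomials of degree $d$. There is no real obstacle here; the only point needing care is to record that the repeated-variable case is handled by allowing $S=T$ among the generators.
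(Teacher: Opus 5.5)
Your proposal is correct and matches the paper's argument: the paper also observes that every monomial other than a matching monomial is divisible by some generator $x_S\cdot x_T$ with $S\cap T\neq\varnothing$ (including $S=T$), and so lies in $I(n)$. You additionally spell out the repeated-variable case and the bound $d\le\lfloor n/2\rfloor$, both of which the paper leaves implicit.
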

\begin{proof}
    This is because all other monomials are killed by the products $x_S\cdot x_T\in I(n)$ with $S\cap T\neq\varnothing$.
\end{proof}
For $\CC\Big[\xxxgraph{n}\Big]\Big/J(n)$, the spanning set above can be narrowed down.
\begin{corollary}\label{cor:span-J}
    The family of standard matching monomials
    \[\bigsqcup_{d=0}^{\lfloor n/2 \rfloor}\bigg\{\mmm(S)\,:\, S\in\binom{[n]}{2d}\bigg\}\]
    descends to a spanning set of $\CC\Big[\xxxgraph{n}\Big]\Big/J(n)$. Furthermore, we have that
    \[w\cdot\mmm(S)\equiv\mmm(w\cdot S)\mod{J(n)}\]
    for all $w\in\symm_n$.
\end{corollary}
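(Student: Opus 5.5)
The plan is to mimic the argument used in the proof of Proposition~\ref{prop:ideal_h_2[h_a]}, replacing $[2a]$ by $[n]$. Since $I(n)\subseteq J(n)$, Lemma~\ref{lem:span-I} shows that the matching monomials $\mmm(\tau)$ for $\tau\in\MMM_{n,d}$, $0\le d\le\lfloor n/2\rfloor$, span $\CC\Big[\xxxgraph{n}\Big]\Big/J(n)$. It therefore suffices to prove the key congruence
\[
\mmm(\tau)\equiv\mmm(\sigma)\mod{J(n)}
\]
for all $\tau,\sigma\in\MMM_{n,d}$ with the same vertex set $\bigcup_{S\in\tau}S=\bigcup_{S\in\sigma}S$. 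Once this holds, every $\mmm(\tau)$ is congruent to the standard matching monomial $\mmm(S)$ with $S=\bigcup_{T\in\tau}T\in\binom{[n]}{2d}$, and this gives the spanning statement.

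The first step is the two-edge move. For pairwise distinct $i,j,k,l$, the generator $x_{\{i,j\}}x_{\{k,l\}}-x_{\{i,k\}}x_{\{j,l\}}$ of $J(n)$ says that the two perfect matchings $\{\{i,j\},\{k,l\}\}$ and $\{\{i,k\},\{j,l\}\}$ of the $4$-set $\{i,j,k,l\}$ give congruent monomials. Relabelling $j\leftrightarrow l$ (the generators are indexed by all ordered quadruples of distinct elements) shows that the third matching $\{\{i,l\},\{k,j\}\}$ is congruent as well. So all three perfect matchings of any $4$-set are identified. Multiplying by the remaining factors $\prod_{r\ge 3}x_{S_r}$ keeps us inside the ideal, so we may replace any two edges $S_1,S_2$ of a matching by any other perfect matching of $S_1\cup S_2$ without changing the class modulo $J(n)$.

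The second step is connectivity: any two perfect matchings $\tau,\sigma$ of a fixed $2d$-set $U$ are linked by a sequence of such two-edge moves. I would argue by induction on $d$. Let $u=\min U$, and let $\{u,v\}\in\sigma$ and $\{u,v'\}\in\tau$. If $v\neq v'$, let $\{v,w\}$ be the edge of $\tau$ containing $v$. A single move on $\{u,v'\},\{v,w\}$ replaces them with $\{u,v\},\{v',w\}$, so after this move $\tau$ and $\sigma$ share the edge $\{u,v\}$. Deleting that common edge and applying induction on $U\setminus\{u,v\}$ finishes the argument. This step is the only real content of the proof, and it is elementary.

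Finally, the equivariance claim follows from the same congruence. Since $w\cdot x_{\{i,j\}}=x_{\{w(i),w(j)\}}$, we get $w\cdot\mmm(S)=\mmm(w\cdot\tau_S)$, where $\tau_S$ is the standard matching of $S$ and $w\cdot\tau_S$ is a perfect matching of $w\cdot S$. By the key congruence, $\mmm(w\cdot\tau_S)\equiv\mmm(w\cdot S)\mod J(n)$. One should also note that $J(n)$ is $\symm_n$-stable, since its generating families are permuted by $\symm_n$, so the action on the quotient is well defined. I do not expect a genuine obstacle here; the only care needed is in verifying the third matching of a $4$-set and in keeping the induction tidy.
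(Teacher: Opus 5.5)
Your proposal is correct and follows the paper's proof. You reduce to matching monomials via Lemma~\ref{lem:span-I}, identify matchings with the same vertex set using the exchange generators $x_{\{i,j\}}x_{\{k,l\}}-x_{\{i,k\}}x_{\{j,l\}}$, and get equivariance from $w\cdot\mmm(\tau_S)=\mmm(w\cdot\tau_S)$. Your induction on $d$ (fixing the edge at $\min U$) just spells out the connectivity step that the paper leaves as ``iterating such interchanges.''
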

\begin{proof}
    Since $I(n)\subseteq J(n)$, Lemma~\ref{lem:span-I} indicates that the family of matching monomials
    \begin{align}\label{eq:generalize-span-coarse}\bigsqcup_{d=0}^{\lfloor n/2\rfloor}\{\mmm(\tau)\,:\,\tau\in\MMM_{n,d}\}\end{align}
    also descends to a spanning set of $\CC\Big[\xxxgraph{n}\Big]\Big/J(n)$. However, a lot of these matching monomials are identified with each other modulo $J(n)$, because all the differences $x_{\{i,j\}}\cdot x_{\{k,l\}} - x_{\{i,k\}}\cdot x_{\{j,l\}}$ allows us to ``interchange'' variable indices by
    \[\mmm(\tau)\equiv\mmm(\tau^\prime)\mod{J(n)}\]
    for $\tau=\{S_1,S_2,S_3,\dots,S_d\},\tau^\prime=\{S_1^\prime,S_2^\prime,S_3,\dots,S_d\}$ with $S_1\cup S_2 = S_1^\prime\cup S_2^\prime$. Iterating such interchanges, we obtain the identification
    \begin{align}\label{eq:generalize-identification}\mmm(\tau)\equiv\mmm(\tau^\prime)\mod{J(n)}\end{align}
    for all $\tau,\tau^\prime \in \MMM_{n,d}$ with $\bigcup_{S\in\tau }S = \bigcup_{T\in\tau^\prime} T$. In particular, it follows that any matching monomial $\mmm(\tau)$ is identified with the standard matching monomial $\mmm(\bigcup_{S\in\tau}S)$ modulo $J(n)$. Therefore, the spanning set \eqref{eq:generalize-span-coarse} can be replaced with a smaller one
    \[\bigsqcup_{d=0}^{\lfloor n/2 \rfloor}\bigg\{\mmm(S)\,:\, S\in\binom{[n]}{2d}\bigg\}.\]
    This set modulo $J(n)$ is stable under the $\symm_n$-action by
    \[w\cdot\mmm(S)=w\cdot\mmm(\tau)=\mmm(w\cdot\tau)\equiv\mmm(w\cdot S)\mod{J(n)}\]
    where $S=\{i_1<i_2<\dots<i_{2d}\},\tau=\{\{i_{2k-1},i_{2k}\}\,:\,1\le k\le d\}$, the congruence $\equiv$ of which arises from \eqref{eq:generalize-identification}. Now our proof is complete.
\end{proof}

The result below enables us to govern the length $\lambda_1$ of the first row of $\lambda$ for all Specht modules $V^\lambda$ occurring in $\Big(\CC\Big[\xxxgraph{n}\Big]\Big/I(n)\Big)_d$.

\begin{lemma}\label{lem:generalize-length}
    For any $\tau\in\MMM_{n,d}$, we have the annihilation property
    \[\eta_j\cdot\mmm(\tau)\in I(n)\]
    for $n-2d<j\le n$.
\end{lemma}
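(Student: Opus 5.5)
The plan is to reduce the statement to the vanishing of certain orbit sums of matching monomials. These sums will then be killed using only the linear generators $\sum_{k\neq i}x_{\{i,k\}}$ of $I(n)$, together with the fact that $x_S x_T\in I(n)$ whenever $S\cap T\neq\varnothing$. Throughout, $\mmm(\sigma)\cdot x_{\{i,k\}}$ will be read modulo $I(n)$. By Lemma~\ref{lem:span-I} this product is zero unless $\{i,k\}$ is disjoint from $\bigcup\sigma$, in which case it is the matching monomial $\mmm(\sigma\cup\{\{i,k\}\})$.

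First, I will rewrite $\eta_j\cdot\mmm(\tau)=\sum_{w\in\symm_j}\mmm(w\cdot\tau)$ as a positive multiple of $\Sigma(\tau)\coloneqq\sum_{\tau'\in\symm_j\cdot\tau}\mmm(\tau')$, the sum over the $\symm_j$-orbit. Set $C=[n]\setminus[j]$, so $|C|=n-j<2d$. The orbit of $\tau$ consists of all $d$-matchings that share the edges of $\tau$ contained in $C$, are matched to the same vertices of $C$ (with the matching partners in $[j]$ allowed to vary), and have the same number $a$ of edges inside $[j]$. Since $|C|<2d$, $\tau$ cannot lie entirely inside $C$, so at least one edge meets $[j]$.

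Second, for a fixed $(d-1)$-matching $\sigma$ and a vertex $r\notin\bigcup\sigma$, multiplying the generator $\sum_{k\neq r}x_{\{r,k\}}$ by $\mmm(\sigma)$ gives
\[\sum_{k\in W\setminus\{r\}}\mmm(\sigma\cup\{\{r,k\}\})\in I(n),\qquad W=[n]\setminus\textstyle\bigcup\sigma.\]
Summing these relations over $r$ and over $\sigma$ in suitable $\symm_j$-orbits produces linear relations among the orbit sums. Write $W=F\sqcup G$ with $F=W\cap[j]$ and $G=W\cap C$. The orbit sums are then indexed by the type of the matching relative to $[j]$: the number $a$ of internal edges, the crossing edges, and the edges inside $C$. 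The relation at $r\in G$ links crossing-type orbit sums to orbit sums with one more edge inside $C$. The relation summed over $r\in F$ links $2\times$(internal-edge sums) to crossing sums.

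Third, I will prove $\Sigma(\tau)\in I(n)$ by induction on the number of vertices of $C$ not covered by $\tau$, which is $|C|-|C\cap\bigcup\tau|$. If this number is $0$, then $G=\varnothing$ in every relation above, and the relation shows directly that the relevant orbit sum lies in $I(n)$. In the inductive step, the relation at a free vertex of $C$ expresses the orbit sum in terms of orbit sums in which that vertex is covered, and these lie in $I(n)$ by induction. The hypothesis $j>n-2d$, equivalently $|C|<2d$, is what guarantees that the process never produces a term with all $d$ edges inside $C$. Such a term would be $\symm_j$-fixed and not annihilable.

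The main obstacle is bookkeeping the coefficients, namely the multiplicities with which each matching appears when the relations are summed over orbits, and checking that they are nonzero. This nonvanishing is what allows solving for the target orbit sum. If the induction becomes messy, I would instead try a direct triangular argument: order orbit types by the number of edges inside $C$, and show that the relation matrix is triangular with nonzero diagonal entries whenever $|C|<2d$.
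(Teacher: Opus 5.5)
Your strategy works and is a genuinely different route from the paper's. However, the base case and inductive step as written do not identify the relations that actually do the work.

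Record a $d$-matching by its type $(a,X,\rho)$: $a$ edges inside $[j]$, $X\subseteq C$ the $C$-endpoints of crossing edges, $\rho$ the edges inside $C$. Put $U=C\setminus(X\cup\bigcup\rho)$. Two problems with your write-up:
\begin{itemize}
\item A relation at a vertex that is free in $\tau$ never contains $\Sigma(\tau)$, since every term it produces covers that vertex.
\item ``$G=\varnothing$'' fails in the base case when $a=0$. For $n=4$, $j=2$, the orbit sum $x_{\{1,3\}}x_{\{2,4\}}+x_{\{1,4\}}x_{\{2,3\}}$ occurs in no relation with $G=\varnothing$.
\end{itemize}
What you need are the following two instances of your families. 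Each is obtained by deleting one edge of $\tau$ and summing over the $\symm_j$-orbit of the resulting $\sigma$:
\[
2a\,\Sigma(a,X,\rho)+\sum_{k\in U}\Sigma(a-1,X\cup\{k\},\rho)\in I(n)\qquad\text{(delete an internal edge, sum over $r\in F$; needs $a\ge 1$)},
\]
\[
\Sigma(a,X,\rho)+\sum_{k\in U}\Sigma\bigl(a,X\setminus\{r\},\rho\cup\{\{r,k\}\}\bigr)\in I(n)\qquad\text{(delete the crossing edge at $r\in X$, relation at $r$)}.
\]
Note that the second relation is taken at a vertex covered by $\tau$ and free only in $\sigma$. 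In both relations every other term has one fewer uncovered vertex of $C$. So strong induction on $|U|$ runs uniformly, and the base case $U=\varnothing$ is just the case where the sums are empty. The coefficients are $2a$ and $1$, so the coefficient bookkeeping you flagged as the main obstacle is not really one. The hypothesis $|C|<2d$ enters only to guarantee $a+|X|\ge 1$, which ensures one of the two relations always applies.

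The paper avoids orbit types and induction altogether. Let $T=\bigcup\tau\cap[j]$. Modulo $I(n)$, the sum over injections $T\hookrightarrow[j]$ factors into a product of independent sums, one for each edge of $\tau$ meeting $[j]$. This works because colliding terms are killed by products of variables indexed by overlapping pairs. Each factor is then rewritten, up to a nonzero multiple, via the linear generators as the corresponding sum over $C$. Re-expanding leaves a sum over injections of $T$ into $C\setminus(\bigcup\tau\setminus T)$. That set has $n-j-2d+|T|<|T|$ elements, so the sum is empty.

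Your two relations are essentially these substitutions applied one factor at a time, and your descent in $|U|$ is a step-by-step form of the paper's pigeonhole. The paper's argument is shorter and needs no type bookkeeping. Yours is more mechanical, using a single generator per step with explicit coefficients.
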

\begin{proof}
    Write $\tau=\{\{i_1,j_1\},\{i_2,j_2\},\dots,\{i_d,j_d\}\}$ and write $S=\{i_1,j_1,i_2,j_2,\dots,i_d,j_d\}$. Suppose that $S\cap[j]=(\bigcup_{k=1}^t\{i_k,j_k\})\cup(\bigcup_{k=t+1}^{t+s}\{i_k\})$ and denote this intersection by $T$. Write $m=\mmm(\tau)/\prod_{k=1}^{t+s}x_{\{i_k,j_k\}}$. Let $\doteq$ denote equalities up to a nonzero multiple. We have that
    \begin{align*}
        &\eta_j \cdot \mmm(\tau) \doteq \sum_{f:T\hookrightarrow[j]}\Bigg(\prod_{k=1}^t x_{\{f(i_k),f(j_k)\}}\Bigg)\Bigg(\prod_{k=t+1}^{t+s}x_{\{f(i_k),j_k\}}\Bigg)m \\
        &\equiv  \Bigg(\prod_{k=1}^t \sum_{f_k:\{i_k,j_k\}\hookrightarrow[j]} x_{\{f_k(i_k),f_k(j_k)\}}\Bigg)\Bigg(\prod_{k=t+1}^{t+s} \sum_{f_k:\{i_k\}\rightarrow [j]} x_{\{f(i_k),j_k\}}\Bigg)m\\
        &\equiv \Bigg(\prod_{k=1}^t \Bigg(-\sum_{f_k:\{i_k,j_k\}\hookrightarrow[n]\setminus[j]} x_{\{f_k(i_k),f_k(j_k)\}}\Bigg)\Bigg)\Bigg(\prod_{k=t+1}^{t+s}\Bigg(-\sum_{f_k:\{i_k\}\rightarrow [n]\setminus([j]\cup\{j_k\})} x_{\{f(i_k),j_k\}}\Bigg)\Bigg)m \\
        &\doteq \Bigg(\prod_{k=1}^t \sum_{f_k:\{i_k,j_k\}\hookrightarrow[n]\setminus[j]} x_{\{f_k(i_k),f_k(j_k)\}}\Bigg)\Bigg(\prod_{k=t+1}^{t+s}\sum_{f_k:\{i_k\}\rightarrow [n]\setminus([j]\cup\{j_k\})} x_{\{f(i_k),j_k\}}\Bigg)m \\
        &\equiv \sum_{f:T\hookrightarrow ([n]\setminus[j])\setminus(S\setminus T)}\Bigg(\prod_{k=1}^t x_{\{f(i_k),f(j_k)\}}\Bigg)\Bigg(\prod_{k=t+1}^{t+s}x_{\{f(i_k),j_k\}}\Bigg)m \mod{I(n)}
    \end{align*}
    where the first and the last congruences $\equiv$ arise from the fact that $x_S\cdot x_T\in I(n)$ whenever $S\cap T\neq\varnothing$, and the second congruence $\equiv$ arises from the fact that $\sum_{\substack{j\in[n]\\j\neq i}}x_{\{i,j\}}\in I(n)$ for all $i\in[n]$. Nonetheless, we note that
    \[\lvert([n]\setminus[j])\setminus(S\setminus T)\rvert = (n-j)-(\lvert S \rvert - \lvert T \rvert) = n-j-2d+\lvert T\rvert<\lvert T \rvert,\]
    which means that there are no injections $f:T\hookrightarrow([n]\setminus[j])\setminus(S\setminus T)$. Therefore, the chain of equations above indicates that $\eta_j\cdot\mmm(\tau)\in I(n)$, finishing the proof.
\end{proof}

\begin{corollary}\label{cor:generalize-length}
    For $0\le d\le\lfloor n/2\rfloor$, any Specht module $V^\lambda$ occurring in either $\Big(\CC\Big[\xxxgraph{n}\Big]\Big/I(n)\Big)_d$ or $\Big(\CC\Big[\xxxgraph{n}\Big]\Big/J(n)\Big)_d$ satisfies that $\lambda_1\le n-2d$.
\end{corollary}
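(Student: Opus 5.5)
The plan is to combine the annihilation property of Lemma~\ref{lem:generalize-length} with the symmetrizer criterion of Lemma~\ref{lem:row-length}. Since $I(n)$ and $J(n)$ are homogeneous and $\symm_n$-stable, the degree-$d$ pieces are $\symm_n$-modules. We first treat $A_d\coloneqq\Big(\CC\Big[\xxxgraph{n}\Big]\Big/I(n)\Big)_d$.

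By Lemma~\ref{lem:span-I}, $A_d$ is spanned by the images of the matching monomials $\mmm(\tau)$ with $\tau\in\MMM_{n,d}$. Only degree-$d$ matching monomials are needed, because $\mmm(\tau)$ has degree $|\tau|$ and the quotient is graded. Now take $j=n-2d+1$. This is a legitimate index with $n-2d<j\le n$, since $d\ge 1$ gives $j\le n$.

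Lemma~\ref{lem:generalize-length} gives $\eta_j\cdot\mmm(\tau)\in I(n)$ for every $\tau$. By linearity, $\eta_j$ therefore acts as zero on all of $A_d$. Suppose some Specht module $V^\lambda$ occurs in $A_d$ with $\lambda_1\ge j=n-2d+1$. Then $\eta_j\cdot V^\lambda\neq 0$ inside that copy, by Lemma~\ref{lem:row-length}. This contradicts $\eta_j A_d=0$, so $\lambda_1\le n-2d$.

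The case $d=0$ is trivial, because $\lambda_1\le n$ always holds.

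For $J(n)$, we use $I(n)\subseteq J(n)$. This gives a graded $\symm_n$-equivariant surjection
\[\Big(\CC\Big[\xxxgraph{n}\Big]\Big/I(n)\Big)_d\twoheadrightarrow\Big(\CC\Big[\xxxgraph{n}\Big]\Big/J(n)\Big)_d.\]
Every irreducible constituent of the target is therefore a constituent of the source, and the bound transfers. Alternatively, one can note directly that $\eta_j$ kills the $J(n)$-quotient as well.

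There is no real obstacle here. The only point to check is that the vanishing of $\eta_j$ on the whole module forces its vanishing on each isotypic constituent, and this holds because the isotypic decomposition is $\symm_n$-stable.
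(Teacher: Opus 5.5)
Your proof is correct and follows the paper's argument: Lemma~\ref{lem:span-I}, Lemma~\ref{lem:generalize-length} with $j=n-2d+1$, and Lemma~\ref{lem:row-length} give the bound for $I(n)$. The bound then passes to $J(n)$ through the $\symm_n$-equivariant surjection induced by $I(n)\subseteq J(n)$; your separate treatment of $d=0$ and the explicit choice of $j$ spell out details the paper leaves implicit.
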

\begin{proof}
    For $\Big(\CC\Big[\xxxgraph{n}\Big]\Big/I(n)\Big)_d$, this result is immediate from Lemmas~\ref{lem:row-length}, \ref{lem:span-I} and \ref{lem:generalize-length}. Then we deduce similar result for $\Big(\CC\Big[\xxxgraph{n}\Big]\Big/J(n)\Big)_d$ from the containment $I(n)\subseteq J(n)$.
\end{proof}

Now we show the graded $\symm_n$-module structures. The following result generalizes Theorem~\ref{thm:grfrob_h_a[h_2]}.

\begin{theorem}\label{thm:generalize-h_a[h_2]}
    For any integer $n>0$, we have that
    \[\grFrob(\CC\Big[\xxxgraph{n}\Big]\Big/I(n);q) = \sum_{\substack{\lambda\vdash n \\ \text{$2\mid\lambda_i$ for $i>1$}}} q^{\frac{n-\lambda_1}{2}} \cdot s_\lambda.\]
\end{theorem}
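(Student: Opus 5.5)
We will identify each graded piece $Q_d\coloneqq\Big(\CC\Big[\xxxgraph{n}\Big]\Big/I(n)\Big)_d$ separately, combining an upper bound from the matching spanning set and the $\lambda_1$-restriction of Corollary~\ref{cor:generalize-length} with a matching lower bound. For even $n$ the statement is Theorem~\ref{thm:grfrob_h_a[h_2]}, since $I(2a)=\gr\II(\Pi_{(2^a)})$. So the new content is the odd case, although the argument below handles all $n$ uniformly.

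\emph{Upper bound.} By Lemma~\ref{lem:span-I}, $Q_d$ is a quotient of $\CC[\MMM_{n,d}]$. The matchings of size $d$ of $[n]$ form a single $\symm_n$-orbit with stabilizer $(\symm_d\wr\symm_2)\times\symm_{n-2d}$, so
\[\Frob(\CC[\MMM_{n,d}])=h_d[h_2]\cdot h_{n-2d}.\]
By Corollary~\ref{cor:generalize-length}, we may truncate this to $\lambda_1\le n-2d$. By \eqref{eq:h_a[h_2]}, $h_d[h_2]=\sum_{\mu\vdash 2d\text{ even}}s_\mu$. The Pieri rule adds a horizontal strip of size $n-2d$ to $\mu$, and the resulting $\lambda$ has $\lambda_1\ge n-2d$, with equality only when the whole strip lies in the first row's extension. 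I will check that $\lambda_1=n-2d$ forces $\lambda=(n-2d,\mu_1,\mu_2,\dots)$ with $\mu_1\le n-2d$. Indeed, a horizontal strip interleaves: $\lambda_1\ge\mu_1\ge\lambda_2\ge\mu_2\ge\cdots$, and $\sum(\lambda_i-\mu_i)=n-2d=\lambda_1$ together with $\lambda_{i+1}\le\mu_i$ forces $\lambda_{i+1}=\mu_i$. Therefore
\[\Frob(Q_d)\le\sum_{\substack{\lambda\vdash n,\ \lambda_1=n-2d\\ 2\mid\lambda_i\ (i>1)}}s_\lambda.\]
Summing over $d$, this yields a coefficientwise upper bound $\grFrob\le$ RHS.

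\emph{Lower bound.} It suffices to show $\dim\Big(\CC\Big[\xxxgraph{n}\Big]\Big/I(n)\Big)\ge\sum_\lambda f^\lambda$ over the $\lambda$ in the RHS. Equivalently, at $q=1$ the Frobenius image must be at least $\sum_{\lambda:\,2\mid\lambda_i,\,i>1}s_\lambda$. For even $n$ this sum is $h_{n/2}[h_2]$. For odd $n$ it is $\{h_1\cdot h_{(n-1)/2}[h_2]\}$ with the appropriate restriction; in fact it equals $\sum_{k}h_k[h_2]h_{n-2k}$ restricted to the minimal-$\lambda_1$ pieces. My plan is to realize it via orbit harmonics on a finite locus. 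For odd $n$, take $\ZZZ$ to be the near-perfect matchings of $[n]$, i.e.\ $\Pi_{(2^{(n-1)/2},1)}$ embedded in $\CCgraph{n}$, with $\Frob(\CC[\ZZZ])=h_{(n-1)/2}[h_2]\cdot h_1$.

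This alone does not suffice, however: $I(n)\not\subseteq\gr\II(\ZZZ)$, because the sums $\sum_jx_{\{i,j\}}$ take the value $0$ at the unmatched point. So instead I will use a direct linear-independence argument. Suppose $\sum_dQ_d$ surjects onto a module $M$ with $\Frob(M)=\sum_{\lambda}s_\lambda$ over the RHS $\lambda$. For each $d$ this holds if the map $\CC[\MMM_{n,d}]\to Q_d$ has kernel no larger than the part with $\lambda_1>n-2d$. Dually, I will construct enough linear functionals on $\CC\Big[\xxxgraph{n}\Big]_d$ vanishing on $I(n)_d$: apply differential operators (harmonic polynomials) $g(\partial)$ that annihilate the generators of $I(n)$. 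Concretely, I will take the harmonic space $I(n)^\perp$ and exhibit, for each target $\lambda$ with $\lambda_1=n-2d$, a highest-weight harmonic of degree $d$. A candidate is a product of $d$ factors of the form $(x_{\{a,b\}}-x_{\{a,b'\}}-x_{\{a',b\}}+x_{\{a',b'\}})$, built from a tableau-like configuration, suitably antisymmetrized along columns.

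This lower bound is the main obstacle. If the harmonic construction proves too messy, the fallback is induction on $n$. Restrict to $\symm_{n-1}$ and use a surjection $\Big(\CC\Big[\xxxgraph{n}\Big]\Big/I(n)\Big)\to\Big(\CC\Big[\xxxgraph{n-1}\Big]\Big/I(n-1)\Big)$-type comparison, via setting $x_{\{i,n\}}$ appropriately, to bound dimensions from below by those of the even case $n-1$ or $n+1$, which Theorem~\ref{thm:grfrob_h_a[h_2]} already provides. One then checks via the branching rule that the $\lambda_1$-graded restriction of the even answer forces every predicted $s_\lambda$ to appear.
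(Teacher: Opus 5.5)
\textbf{Upper bound.} This part matches the paper. Write $Q(n)=\CC\big[\xxxgraph{n}\big]/I(n)$ and $Q_d(n)$ for its degree-$d$ piece. $Q_d(n)$ is a quotient of $\CC[\MMM_{n,d}]$, whose Frobenius image is $h_d[h_2]\cdot h_{n-2d}$. Pieri together with Corollary~\ref{cor:generalize-length} then leaves exactly the $s_\lambda$ with $\lambda_1=n-2d$ and $2\mid\lambda_i$ for $i>1$. Your interleaving computation is the right justification. The phrase ``the whole strip lies in the first row's extension'' is not accurate, though: when $\lambda_1=n-2d$ the strip has one cell in every column, and the cells over columns $1,\dots,\mu_1$ sit below row $1$.

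\textbf{Lower bound: the main plan has a gap.} You are right to discard the near-perfect-matching locus. However, the harmonic plan is not yet an argument, because no harmonic is actually written down. A product of forms $x_{\{a,b\}}-x_{\{a,b'\}}-x_{\{a',b\}}+x_{\{a',b'\}}$ is visibly harmonic only when the factors have pairwise disjoint supports. That requires $4d\le n$. Already $\lambda=(3,2,2)$ for $n=7$ lives in degree $d=2$ with $4d=8>7$, so there you would need cancellations you have not exhibited.

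\textbf{Your fallback is the paper's proof.} It closes the gap once made precise.
\begin{itemize}
\item Let $\mathcal{F}$ set $x_{\{i,n\}}=0$ for $i\in[n-1]$. Then $\mathcal{F}(I(n))\subseteq I(n-1)$: the row sum at $n$ goes to $0$, the other row sums go to row sums on $[n-1]$, and intersecting products go to $0$ or to intersecting products.
\item For odd $n$ this gives a graded $\symm_{n-1}$-surjection $\Res^{\symm_n}_{\symm_{n-1}}Q(n)\twoheadrightarrow Q(n-1)=R(\Pi_{(2^{(n-1)/2})})$.
\item Only $n-1$ works. Using $n+1$ and the same map gives a surjection $\Res^{\symm_{n+1}}_{\symm_n}Q(n+1)\twoheadrightarrow Q(n)$, which is just another upper bound.
\item As your mention of the branching rule suggests, the comparison must be at the level of characters, not dimensions. $\dim Q_d(n-1)$ is in general strictly smaller than the target $\sum f^\lambda$.
\end{itemize}

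\textbf{Key branching step.} Take a target $\lambda$. Then $\lambda_1=n-2d$ is odd and $\lambda_2$ is even, so $\lambda_1>\lambda_2$. Hence $\mu=(\lambda_1-1,\lambda_2,\lambda_3,\dots)$ is an even partition of $n-1$ with $\mu_1=n-1-2d$, and $V^\mu$ occurs in $\Res Q_d(n)$. Among the $\lambda'$ allowed by your upper bound, only $\lambda'=\lambda$ yields $\mu$ after deleting one cell, since deleting a cell below row $1$ leaves first row $n-2d$. Hence $V^\lambda$ occurs in $Q_d(n)$.

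\textbf{Parity matters.} For even $n$ the same induction misses every $\lambda$ with $\lambda_1=\lambda_2$. For example, $(2,2)$ for $n=4$ is missed because $Q_1(3)=0$. So your claim that the argument handles all $n$ uniformly is unfounded. The even case genuinely rests on Theorem~\ref{thm:grfrob_h_a[h_2]}.
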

\begin{proof}
    If $n$ is even, Theorem~\ref{thm:grfrob_h_a[h_2]} straightly indicates this result. From now on, we suppose that $n$ is odd. We may further suppose that $n>1$ because the case $n=1$ is trivial.
    Lemma~\ref{lem:span-I} yields $\symm_n$-module surjections
    \begin{align*}
        \CC[\MMM_{n,d}] &\twoheadrightarrow \Big(\CC\Big[\xxxgraph{n}\Big]\Big/I(n)\Big)_d \\
        \tau&\mapsto \mmm(\tau)\mod{I(n)}\qquad\text{for all $\tau\in\MMM_{n,d}$}
    \end{align*}
    for $0\le d\le\lfloor n/2 \rfloor$. Therefore, we have that
    \begin{align}\label{ineq:generalize-h_a[h_2]}
        &\Frob\Big(\Big(\CC\Big[\xxxgraph{n}\Big]\Big/I(n)\Big)_d\Big) \le \Frob(\CC[\MMM_{n,d}]) = \Frob(\CC[\Pi_{(2^d)}]\circ\mathbf{1}_{\symm_{n-2d}}) \\ \nonumber=& h_d[h_2]\cdot h_{n-2d} =\sum_{\substack{\mu\vdash 2d\\\text{$\mu$ is even}}} s_\mu \cdot h_{n-2d} = \sum_{\substack{\mu\vdash 2d\\\text{$\mu$ is even}}}\sum_{\substack{\text{$\lambda/\mu$ is a horizontal stripe,}\\ \lvert\lambda/\mu\rvert = n-2d}} s_\lambda
    \end{align}
    where the last equal sign uses the Pieri rule. Then we apply Corollary~\ref{cor:generalize-length} to truncate the last expression in \eqref{ineq:generalize-h_a[h_2]}, yielding that
    \begin{align}\label{ineq:generalize-h_a[h_2]-strong}
        \Frob\Big(\Big(\CC\Big[\xxxgraph{n}\Big]\Big/I(n)\Big)_d\Big)  \le \Bigg\{\sum_{\substack{\mu\vdash 2d\\\text{$\mu$ is even}}}\sum_{\substack{\text{$\lambda/\mu$ is a horizontal stripe,}\\ \lvert\lambda/\mu\rvert = n-2d}} s_\lambda \Bigg\}_{\lambda_1\le n-2d} = \sum_{\substack{\lambda\vdash n \\ \lambda_1=n-2d \\ \text{$2\mid\lambda_i$ for $i>1$}}} s_\lambda.
    \end{align}
    for $0\le d\le\lfloor n/2 \rfloor$. Our goal is to force this inequality to be an equality.

    Consider the graded surjection
    \begin{align*}
        \mathcal{F}\,:\,\CC\Big[\xxxgraph{n}\Big] &\twoheadrightarrow \CC\Big[\xxxgraph{n-1}\Big]
    \end{align*}
    given by the evaluation $x_{\{i,n\}} = 0$ for all $i\in[n-1]$. Note that $\mathcal{F}$ is $\symm_{n-1}$-equivariant and that $\mathcal{F}(I(n))\subseteq I(n-1)$. Then $\mathcal{F}$ descends to a graded $\symm_{n-1}$-module surjection
    \[\Res_{\symm_{n-1}}^{\symm_n}\Big(\CC\Big[\xxxgraph{n}\Big]\Big/I(n)\Big) \twoheadrightarrow \CC\Big[\xxxgraph{n-1}\Big]\Big/I(n-1)=R\Big(\Pi_{(2^{\frac{n-1}{2}})}\Big),\]
    indicating that
    \[\Frob\Big(\Res_{\symm_{n-1}}^{\symm_n}\Big(\CC\Big[\xxxgraph{n}\Big]\Big/I(n)\Big)_d\Big) \ge \Frob\Big(R\Big(\Pi_{(2^{\frac{n-1}{2}})}\Big)_d\Big) = \sum_{\substack{ \lambda\vdash n-1\\ \lambda_1 =n-1-2d \\ \text{$\lambda$ is even}}}s_\lambda\]
    where the two equal sign follows from Theorem~\ref{thm:grfrob_h_a[h_2]}.
    Combining the inequality above with the Branching rule, we force \eqref{ineq:generalize-h_a[h_2]-strong} to be an equality for $0\le d\le\lfloor n/2\rfloor$. As Lemma~\ref{lem:span-I} indicates that $\Big(\CC\Big[\xxxgraph{n}\Big]\Big/I(n)\Big)_d=0$ for $d>\lfloor n/2\rfloor$, the proof is complete.
\end{proof}

The following result generalizes Theorem~\ref{thm:grad_frob_h_2[h_a]}.

\begin{theorem}\label{thm:generalize-h_2[h_a]}
    For any integer $n>0$, we have that
    \[\grFrob\Big(\CC\Big[\xxxgraph{n}\Big]\Big/J(n);q\Big) = \sum_{d=0}^{\lfloor n/4 \rfloor} q^d \cdot s_{(n-2d,2d)}.\]
\end{theorem}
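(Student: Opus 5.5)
The plan mirrors the proof of Theorem~\ref{thm:generalize-h_a[h_2]}: an upper bound degree by degree from a spanning set together with the $\lambda_1$-truncation, then a matching lower bound by restricting to $\symm_{n-1}$. If $n$ is even, Theorem~\ref{thm:grad_frob_h_2[h_a]} and Proposition~\ref{prop:ideal_h_2[h_a]} give the result directly, since $J(2a)=K_a=\gr\II(\Pi_{(a^2)})$ and $\lfloor 2a/4\rfloor=\lfloor a/2\rfloor$. So assume $n$ is odd and $n>1$, write $n=2a+1$, and note $\lfloor n/4\rfloor=\lfloor a/2\rfloor$.

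\emph{Upper bound.} By Corollary~\ref{cor:span-J}, the map $S\mapsto \mmm(S)\bmod J(n)$ is an $\symm_n$-equivariant surjection
\[
\CC\Big[\binom{[n]}{2d}\Big]\twoheadrightarrow \Big(\CC\Big[\xxxgraph{n}\Big]\Big/J(n)\Big)_d .
\]
Hence the Frobenius image in degree $d$ is at most $h_{2d}h_{n-2d}$. By Corollary~\ref{cor:generalize-length}, every Specht module $V^\lambda$ appearing in degree $d$ has $\lambda_1\le n-2d$. By the Pieri rule,
\[
\{h_{2d}h_{n-2d}\}_{\lambda_1\le n-2d}=
\begin{cases}
s_{(n-2d,2d)}, & \text{if } 2d\le n-2d,\\
0, & \text{otherwise.}
\end{cases}
\]
Here we use that the constituents of $h_{2d}h_{n-2d}$ are the $s_{(n-k,k)}$ with $k\le\min(2d,n-2d)$, and the condition $n-k\le n-2d$ forces $k=2d$. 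Therefore degree $d$ is either zero or a single copy of $V^{(n-2d,2d)}$, and the latter is possible only when $4d\le n$, i.e.\ $d\le\lfloor n/4\rfloor$.

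\emph{Lower bound.} Use the evaluation $\mathcal F$ setting $x_{\{i,n\}}=0$, as in the proof of Theorem~\ref{thm:generalize-h_a[h_2]}. I need to check that $\mathcal F(J(n))\subseteq J(n-1)$, i.e.\ that each generator maps into $J(n-1)$:
\begin{itemize}
\item the row sums for $i<n$ map to row sums in $[n-1]$, and the row sum for $i=n$ maps to $0$;
\item the products $x_Sx_T$ with $S\cap T\ne\varnothing$ map either to $0$ or to such products in $[n-1]$;
\item a difference $x_{\{i,j\}}x_{\{k,l\}}-x_{\{i,k\}}x_{\{j,l\}}$ involving the index $n$ maps to $0$, since each of its two monomials then contains a variable $x_{\{\cdot,n\}}$.
\end{itemize}
This gives a graded $\symm_{n-1}$-surjection
\[
\Res^{\symm_n}_{\symm_{n-1}}\Big(\CC\Big[\xxxgraph{n}\Big]\Big/J(n)\Big)\twoheadrightarrow \CC\Big[\xxxgraph{n-1}\Big]\Big/J(n-1)=R(\Pi_{(a^2)}).
\]
By Theorem~\ref{thm:grad_frob_h_2[h_a]}, the degree $d$ part of the target is $V^{(2a-2d,2d)}$, which is nonzero for every $d\le\lfloor a/2\rfloor=\lfloor n/4\rfloor$. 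So degree $d$ of $\CC\big[\xxxgraph{n}\big]/J(n)$ is nonzero in this range. Combined with the upper bound, it must equal $V^{(n-2d,2d)}$. For $d>\lfloor n/4\rfloor$, the upper bound already gives $0$.

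The only step needing real care is the compatibility $\mathcal F(J(n))\subseteq J(n-1)$ for the quadratic exchange relations, which is handled by the third case above. The rest is bookkeeping with the Pieri rule. The branching rule is not even needed here: mere nonvanishing after restriction suffices, because the upper bound already pins each degree to at most one irreducible.
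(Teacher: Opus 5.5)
Your proof is correct and follows essentially the same route as the paper. The spanning set of standard matching monomials, combined with the $\lambda_1$-truncation, bounds each degree by $s_{(n-2d,2d)}$, and the evaluation $x_{\{i,n\}}\mapsto 0$ onto $R(\Pi_{(a^2)})$ gives the matching lower bound. You also make three small valid streamlinings: you check $\mathcal{F}(J(n))\subseteq J(n-1)$ on generators explicitly, you note that nonvanishing suffices in place of the branching rule once each degree is bounded by a single irreducible, and you correctly write $h_{2d}h_{n-2d}$ where the paper has the typo $h_d\cdot h_{n-2d}$.
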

\begin{remark}\label{rmk:generalize-h_2[h_a]}
    Theorem~\ref{thm:generalize-h_2[h_a]} provides the Specht module $V^{(n-2d,2d)}$ with another model, which is a quotient of $\CC\Big[\xxxgraph{n}\Big]_d$.
\end{remark}
\begin{proof}
    If $n$ is even, Proposition~\ref{prop:ideal_h_2[h_a]} indicates that $\CC\Big[\xxxgraph{n}\Big]\Big/J(n) = R(\Pi_{((n/2)^2)})$, and hence Theorem~\ref{thm:grad_frob_h_2[h_a]} concludes our proof. Therefore, we suppose that $n$ is odd. We further require that $n>1$ because the case $n=1$ is trivial. Then Corollary~\ref{cor:span-J} indicates $\symm_n$-module surjections
    \begin{align*}
        \CC\Big[\binom{[n]}{2d}\Big]&\twoheadrightarrow\Big(\CC\Big[\xxxgraph{n}\Big]\Big/J(n)\Big)_d \\
        S &\mapsto \mmm(S) \mod{J(n)} \qquad\text{for all $S\in\binom{[n]}{2d}$}
    \end{align*}
    for $d\ge 0$, with the convention that for $d>\lfloor n/2\rfloor$ both $\symm_n$-modules above equal $0$. Consequently, we have that
    \begin{align*}
        \Frob\Big(\Big(\CC\Big[\xxxgraph{n}\Big]\Big/J(n)\Big)_d\Big) \le \Frob\Big(\CC\Big[\binom{[n]}{2d}\Big]\Big) = \Frob(\mathbf{1}_{\symm_{2d}}\circ \mathbf{1}_{\symm_{n-2d}})=h_d\cdot h_{n-2d},
    \end{align*}
    which, together with Corollary~\ref{cor:generalize-length}, indicates that
    \begin{align}\label{ineq:generalize-h_2[h_a]}
        \Frob\Big(\Big(\CC\Big[\xxxgraph{n}\Big]\Big/J(n)\Big)_d\Big) \le \{h_d\cdot h_{n-2d}\}_{\lambda_1\le n-2d}=\begin{cases}
            s_{(n-2d,2d)}, &\text{if $n-2d\ge 2d$} \\
            0, &\text{otherwise}
        \end{cases}
    \end{align}
    where the equal sign is derived from the Pieri rule.
    Our goal is to force this inequality to be an equality.

    Consider the graded surjection
    \[\mathcal{F}\,:\,\CC\Big[\xxxgraph{n}\Big]\twoheadrightarrow\CC\Big[\xxxgraph{n-1}\Big]\]
    given by the evaluation $x_{\{i,n\}}=0$ for all $i\in[n-1]$. Note that $\mathcal{F}$ is $\symm_{n-1}$-equivariant and that $\mathcal{F}(J(n))\subseteq J(n-1)$. Thus, $\mathcal{F}$ descends to a graded $\symm_{n-1}$-module surjection
    \[\Res_{\symm_{n-1}}^{\symm_n}\Big(\CC\Big[\xxxgraph{n}\Big]\Big/J(n) \Big)\twoheadrightarrow\CC\Big[\xxxgraph{n-1}\Big]\Big/J(n-1) = R\Big(\Pi_{((\frac{n-1}{2})^2)}\Big)\]
    where the equal sign follows from Proposition~\ref{prop:ideal_h_2[h_a]}. We thus deduce that
    \[\Frob\Big(\Res_{\symm_{n-1}}^{\symm_{n}}\Big(\CC\Big[\xxxgraph{n}\Big]\Big/J(n)\Big)_d\Big)\ge\Frob\Big(R\Big(\Pi_{((\frac{n-1}{2})^2)}\Big)_d\Big)=\begin{cases}
        s_{(n-1-2d,2d)}, &\text{if $0\le d\le\lfloor\frac{n-1}{4}\rfloor$} \\
        0, &\text{otherwise}
    \end{cases}\]
    where the equal sign arises from Theorem~\ref{thm:grad_frob_h_2[h_a]}. This inequality, together with the Branching rule, forces \eqref{ineq:generalize-h_2[h_a]} to be an equality (note that for odd integer $n$ we have that $n-2d\ge 2d$ if and only if $n-1-2d\ge 2d$), which finishes the proof.
\end{proof}

\subsection{General loci $\Pi_\lambda$ and their unions}\label{subsec:general}

For $0\le m\le n$, we consider a larger locus
\[\Pi_{n,m}\coloneqq\bigsqcup_{\substack{\lambda\vdash n \\ \lambda_1\le m}} \Pi_\lambda.\]
Interestingly, we find the standard monomial basis of $R(\Pi_{n,m})$ with respect to any monomial order by Theorem~\ref{thm:general-satandard}. We also give a concise presentation for the ring $R(\Pi_{n,m})$ in Corollary~\ref{cor:basis-presentation}. Here are some useful definitions and technical results beforehand.

\begin{definition}\label{def:graph-mono}
    For any undirected simple graph $G$ with vertex set $[n]$ and edge set $E(G)\subseteq\binom{[n]}{2}$, we define the \emph{graph monomial} $\mmm(G)\in\CC\Big[\xxxgraph{n}\Big]$ by
    \[\mmm(G)\coloneqq\prod_{S\in E(G)}x_S.\]
\end{definition}
\begin{remark}\label{rmk:graph-mono}
    Note that all graph monomials are exactly all monomials without repeated variables.
\end{remark}

\begin{definition}\label{def:general-ideal}
    For $0\le m \le n$, let $I_{n,m}\subseteq\CC\Big[\xxxgraph{n}\Big]$ be the ideal generated by
    \begin{itemize}
        \item all squares $x_S^2$ for $S\in\binom{[n]}{2}$,
        \item all differences $x_{\{i,j\}}\cdot x_{\{j,k\}} - x_{\{i,k\}}\cdot x_{\{j,k\}}$ for pairwise distinct $i,j,k\in[n]$, and
        \item all monomials $\mmm(T)$ for trees $T$ with $m+1$ vertices in $[n]$.
    \end{itemize}
\end{definition}

We have the following ideal containment, and we will see that the equality holds by Corollary~\ref{cor:basis-presentation}.
\begin{lemma}\label{lem:general-contain}
    We have $I_{n,m}\subseteq\gr\II(\Pi_{n,m})$.
\end{lemma}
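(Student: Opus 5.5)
It suffices to check that each of the three families of generators of $I_{n,m}$ in Definition~\ref{def:general-ideal} is the top-degree part $\tau(f)$ of some $f\in\II(\Pi_{n,m})$. Since $\gr\II(\Pi_{n,m})$ is an ideal, it then contains all of $I_{n,m}$. Throughout, a point $\pi\in\Pi_{n,m}$ is a $0$-$1$ vector whose support is a disjoint union of cliques, each with at most $m$ vertices.

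\emph{Squares.} As in the proof of Theorem~\ref{thm:grfrob_h_a[h_2]}, every coordinate of every point lies in $\{0,1\}$. Hence $x_S^2-x_S\in\II(\Pi_{n,m})$, and its top-degree part is $x_S^2$.

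\emph{Differences.} For pairwise distinct $i,j,k$, the difference $x_{\{i,j\}}x_{\{j,k\}}-x_{\{i,k\}}x_{\{j,k\}}$ itself lies in $\II(\Pi_{n,m})$, because being in the same block is transitive. If $x_{\{j,k\}}(\pi)=0$, both terms vanish. If $x_{\{j,k\}}(\pi)=1$, then $j$ and $k$ share a block. In that case $i$ lies in that block iff $x_{\{i,j\}}(\pi)=1$ iff $x_{\{i,k\}}(\pi)=1$, so the two terms agree. The polynomial is homogeneous, so it equals its own top-degree part.

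\emph{Tree monomials.} Let $T$ be a tree on a vertex set $V\subseteq[n]$ with $|V|=m+1$. Suppose $\mmm(T)(\pi)=1$. Then every edge of $T$ joins two vertices in the same block. Since $T$ is connected, all $m+1$ vertices of $V$ then lie in one block, which contradicts $\lambda_1\le m$. Hence $\mmm(T)$ vanishes on $\Pi_{n,m}$, so $\mmm(T)\in\II(\Pi_{n,m})\subseteq\gr\II(\Pi_{n,m})$.

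None of these steps is a real obstacle. The only point needing a moment's thought is the tree case, where the connectivity of $T$ is what forces all $m+1$ vertices into a single block.
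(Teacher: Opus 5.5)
Your proposal is correct and follows essentially the same route as the paper: you check the three generator families, using $x_S^2-x_S\in\II(\Pi_{n,m})$, the fact that the homogeneous difference already vanishes on the locus, and the fact that a tree monomial on $m+1$ vertices vanishes because connectivity would force a block of size $m+1$. Your case split on $x_{\{j,k\}}(\pi)$ is just a rephrasing of the paper's observation that both products equal the indicator that $i,j,k$ lie in a common block.
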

\begin{proof}
    It suffices to show that all three families of generators mentioned in Definition~\ref{def:general-ideal} belong to $\gr\II(\Pi_{n,m})$.
    
    For the squares $x_S^2$, we have that $x_S^2-x_S\in\II(\Pi_{n,m})$ since any unordered set partition is identified with a $0$-$1$ vector in $\CC^{\binom{[n]}{2}}$, from which we deduce that $x_S^2\in\gr\II(\Pi_{n,m})$.
    
    For the differences $x_{\{i,j\}}\cdot x_{\{j,k\}} - x_{\{i,k\}}\cdot x_{\{j,k\}}$, write $f=x_{\{i,j\}}\cdot x_{\{j,k\}}$ and $g=x_{\{i,k\}}\cdot x_{\{j,k\}}$. Note that for any $\pi\in\Pi_{n,m}$ we have
    \[f(\pi) = \begin{cases}
        1, &\text{if $i,j,k$ belong to the same block of $\pi$} \\
        0, &\text{otherwise}
    \end{cases}\]
    and
    \[g(\pi) = \begin{cases}
        1, &\text{if $i,j,k$ belong to the same block of $\pi$} \\
        0, &\text{otherwise.}
    \end{cases}\]
    Therefore, $(f-g)(\pi)=0$ for all $\pi\in\Pi_{n,m}$, so $f-g\in\II(\Pi_{n,m})$ and thus $f-g\in\gr\II(\Pi_{n,m})$.

    For the monomials $\mmm(T)$ where $T$ is a tree with $m+1$ vertices in $[n]$, we claim that $\mmm(T)\in\II(\Pi_{n,m})$. In fact, for all $\pi\in\Pi_{n,m}$, each block of $\pi$ has no more than $m$ elements, but for any unordered set partition $\pi^\prime$ we have that $\mmm(T)(\pi^\prime)\neq0$ if and only if the vertex set of $T$ is contained in a single block of $\pi^\prime$. Therefore, $\mmm(T)(\pi)$ vanishes for all $\pi\in\Pi_{n,m}$, indicating that $\mmm(T)\in\gr\II(\Pi_{n,m})$.
\end{proof}

To better understand $I_{n,m}$ and thus $\gr\II(\Pi_{n,m})$, we seek some useful elements in $I_{n,m}$, which provides us with powerful relations on the quotients $\CC\Big[\xxxgraph{n}\Big]\Big/ I_{n,m}$ and hence $R(\Pi_{n,m})$.

\begin{definition}\label{def:forest-partition}
    For a forest $F$ on the vertex set $[n]$, write all vertex sets of its connected components by $B_1,\dots,B_\ell$. Define the \emph{unordered set partition associated with $F$} by
    \[\tilde{\pi}(F)\coloneqq\{B_1,\dots,B_\ell\}\]
    which is an unordered set partition of $[n]$.
\end{definition}
\begin{example}\label{ex:forest-partition}
    For $n=7$, let $F$ be the forest on $[n]$ with edge set \[E(F)=\{\{1,2\},\{1,3\},\{1,4\},\{6,7\}\}.\]
    Then $F$ has four connected components corresponding to vertex sets $\{1,2,3,4\},\{5\},\{6,7\}$. Therefore, we have $\tilde{\pi}(F)=\{\{1,2,3,4\},\{5\},\{6,7\}\}$.
\end{example}

\begin{lemma}\label{lem:diff-two-forests}
    For two forests $F_1,F_2$ on the vertex set $[n]$ with $\tilde{\pi}(F_1)=\tilde{\pi}(F_2)$, we have that
    \[\mmm(F_1)\equiv\mmm(F_2)\mod{I_{n,m}}.\]
\end{lemma}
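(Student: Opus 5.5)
The idea is to show that the relations $x_{\{i,j\}}\cdot x_{\{j,k\}}\equiv x_{\{i,k\}}\cdot x_{\{j,k\}}$ alone let us transform any spanning tree of a block into a fixed canonical tree. The other two families of generators of $I_{n,m}$ are not needed.

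\emph{Reduction to single trees.} Write $\tilde{\pi}(F_1)=\tilde{\pi}(F_2)=\{B_1,\dots,B_\ell\}$. For $t=1,2$, the edges of $F_t$ split into spanning trees $T_{t,1},\dots,T_{t,\ell}$ of the blocks $B_1,\dots,B_\ell$, and $\mmm(F_t)=\prod_{k}\mmm(T_{t,k})$. Congruences modulo an ideal multiply. So it suffices to prove that for a fixed finite set $B\subseteq[n]$, any two spanning trees $T,T'$ on $B$ satisfy $\mmm(T)\equiv\mmm(T')\mod{I_{n,m}}$. We will do this by showing that every spanning tree on $B$ is congruent to the star $\mathrm{St}_r$ centered at $r=\min B$.

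\emph{Elementary moves.} Suppose $T$ contains two adjacent edges $\{u,v\},\{v,w\}$ with $u,v,w$ distinct. Let $T'$ be obtained from $T$ by replacing $\{u,v\}$ with $\{u,w\}$.
\begin{itemize}
\item $T'$ is again a spanning tree of $B$. Deleting $\{u,v\}$ separates $T$ into a $u$-side and a $v$-side, and $w$ lies on the $v$-side because $\{v,w\}$ survives. Adding $\{u,w\}$ therefore reconnects the two sides.
\item $\mmm(T)\equiv\mmm(T')\mod{I_{n,m}}$. By Remark~\ref{rmk:interchange-index}, the generator $x_{\{u,v\}}x_{\{v,w\}}-x_{\{u,w\}}x_{\{v,w\}}$ (the case $i=u$, $j=v$, $k=w$) lies in $I_{n,m}$. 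Multiplying it by the product of the remaining edge variables of $T$ gives $\mmm(T)-\mmm(T')\in I_{n,m}$.
\end{itemize}

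\emph{Reaching the star.} Root $T$ at $r$ and use the potential $\Phi(T)=\sum_{u\in B}\operatorname{dist}_T(r,u)$. If $T\neq\mathrm{St}_r$, some vertex $u$ has depth $k\ge 2$; let $r=v_0,v_1,\dots,v_k=u$ be the path to it. Apply the move with $(u,v,w)=(v_k,v_{k-1},v_{k-2})$, replacing $\{v_{k-1},v_k\}$ by $\{v_k,v_{k-2}\}$. The subtree hanging below $u$ is reattached one level higher. Every vertex in it loses exactly $1$ in depth and all other depths are unchanged, so $\Phi$ strictly decreases. Since $\Phi$ is a nonnegative integer, iterating terminates, and it can only stop at $\mathrm{St}_r$, the unique spanning tree in which every vertex has depth at most $1$. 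Hence $\mmm(T)\equiv\mmm(\mathrm{St}_r)$ for every spanning tree $T$ of $B$, which proves the lemma.

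The only point needing care is the bookkeeping in the last step: checking that the move yields a tree and that the depth potential strictly drops. Both follow directly from the description of the reattached subtree, so I do not expect a genuine obstacle.
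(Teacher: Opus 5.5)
Your proposal is correct and takes the same route as the paper: reduce to spanning trees of each block, then use the generators $x_{\{i,j\}}x_{\{j,k\}}-x_{\{i,k\}}x_{\{j,k\}}$ to move one edge at a time. Your depth-potential argument, which drives every spanning tree to the star centered at $\min B$, supplies the connectivity step that the paper only asserts.
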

\begin{proof}
    We first focus on two trees $T_1,T_2$ on a vertex set $B\subseteq[n]$. Then the generators $x_{\{i,j\}}\cdot x_{\{j,k\}}-x_{\{i,k\}}\cdot x_{\{j,k\}}\in I_{n,m}$ enables us to move one edge of a tree $T$ with vertex set $B$ without changing the congruence class $\mmm(T)\mod{I_{n,m}}$ or the vertex set $B$. Therefore, we can do finitely many such moving operations to convert $T_1$ into $T_2$, indicating that $\mmm(T_1)\equiv\mmm(T_2)\mod{I_{n,m}}$. Applying this fact to all the connected components of $F_1$ and $F_2$ (where we need the assumption $\tilde{\pi}(F_1)=\tilde{\pi}(F_2)$), we conclude that $\mmm(F_1)\equiv\mmm(F_2)\mod{I_{n,m}}$.
\end{proof}

\begin{corollary}\label{cor:cycle-vanish}
    For any cycle $C$ on the vertex set $[n]$, we have that
    \[\mmm(C)\equiv 0\mod{I_{n,m}}.\]
\end{corollary}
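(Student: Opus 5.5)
Let $C$ be a cycle with vertices $v_1,v_2,\dots,v_k$ in cyclic order, $k\ge 3$, and edges $\{v_1,v_2\},\dots,\{v_{k-1},v_k\},\{v_k,v_1\}$. The plan is to use the exchange relations $x_{\{i,j\}}x_{\{j,k\}}\equiv x_{\{i,k\}}x_{\{j,k\}}$ to move edges of $C$ until two of them coincide. The resulting monomial then contains a square $x_S^2$, which lies in $I_{n,m}$.

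The direct way is as follows. Write
\[\mmm(C)=x_{\{v_1,v_2\}}\cdot\mmm(P),\]
where $P$ is the path $v_2,v_3,\dots,v_k,v_1$. This path is a tree on the vertex set $B=\{v_1,\dots,v_k\}$. By the argument in the proof of Lemma~\ref{lem:diff-two-forests}, any two trees on the same vertex set $B$ have congruent monomials modulo $I_{n,m}$. So $\mmm(P)\equiv\mmm(P')$ for any spanning tree $P'$ of $B$. Choose $P'$ to contain the edge $\{v_1,v_2\}$; for instance, the path $v_1,v_2,v_3,\dots,v_k$ works.

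Since $I_{n,m}$ is an ideal, multiplying the congruence by $x_{\{v_1,v_2\}}$ is allowed. This gives
\[\mmm(C)\equiv x_{\{v_1,v_2\}}\,\mmm(P')=x_{\{v_1,v_2\}}^2\cdot\mmm(P'\setminus\{v_1,v_2\})\equiv 0 \pmod{I_{n,m}},\]
because $x_{\{v_1,v_2\}}^2$ is a generator of $I_{n,m}$.

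The one point needing care is the tree-moving fact used in Lemma~\ref{lem:diff-two-forests}. I would state it explicitly for trees on an arbitrary vertex subset $B\subseteq[n]$, not only for spanning forests of $[n]$. This causes no trouble: adding the isolated vertices $[n]\setminus B$ turns a tree on $B$ into a forest on $[n]$, and two trees on $B$ then give forests with the same associated partition $\tilde{\pi}$. So Lemma~\ref{lem:diff-two-forests} applies verbatim.

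For a self-contained check in the smallest case $k=3$, a single exchange suffices:
\[x_{\{i,j\}}x_{\{j,k\}}x_{\{i,k\}}\equiv x_{\{i,k\}}x_{\{j,k\}}x_{\{i,k\}}=x_{\{i,k\}}^2x_{\{j,k\}}\equiv 0.\]
The general case reduces to the argument above, so I expect no real obstacle.
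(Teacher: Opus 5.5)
Your proof is correct and follows essentially the paper's argument. Both split off one edge of the cycle, use Lemma~\ref{lem:diff-two-forests} to replace the remaining path by another tree on the same vertex set that contains the removed edge, and conclude from the resulting factor $x_S^2\in I_{n,m}$; your remark about padding a tree on $B\subseteq[n]$ with isolated vertices makes explicit a step the paper leaves implicit.
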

\begin{proof}
    Write all the edges of $C$ by $\{i_1,i_2\},\{i_2,i_3\},\dots,\{i_{c-1},i_c\},\{i_c,i_1\}$. Let $P$ be the path with totally $c-1$ edges $\{i_1,i_2\},\{i_2,i_3\},\dots,\{i_{c-1},i_c\}$. Let $P^\prime$ be another path with totally $c-1$ edges $\{i_c,i_1\},\{i_1,i_2\},\dots,\{i_{c-2},i_{c-1}\}$. Lemma~\ref{lem:diff-two-forests} indicates that $\mmm(P)\equiv\mmm(P^\prime)\mod{I_{n,m}}$ and hence that
    \[\mmm(C)=\mmm(P)\cdot x_{\{i_c,i_1\}}\equiv\mmm(P^\prime)\cdot x_{\{i_c,i_1\}}\equiv 0\mod{I_{n,m}}\]
    where the last congruence $\equiv$ follows from the fact that $x_{\{i_c,i_1\}}^2\mid(\mmm(P^\prime)\cdot x_{\{i_c,i_1\}})$.
\end{proof}

This is the last technique result in this section.

\begin{lemma}\label{lem:general-span}
    For any family of forests $\{F_{\pi}\}_{\pi\in\Pi_{n,m}}$ on the vertex set $[n]$ with $\tilde{\pi}(F_{\pi}) = \pi$ for all $\pi\in\Pi_{n,m}$, the family of monomials $\{\mmm(F_\pi)\,:\pi\in\Pi_{n,m}\}$ descends to a spanning set of $\CC\Big[\xxxgraph{n}\Big]\Big/ I_{n,m}$.
\end{lemma}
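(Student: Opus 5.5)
Since $\CC\Big[\xxxgraph{n}\Big]$ is spanned by monomials, it suffices to show that every monomial is congruent modulo $I_{n,m}$ either to $0$ or to some $\mmm(F_\pi)$ with $\pi\in\Pi_{n,m}$. The plan is to reduce an arbitrary monomial in three steps: first to a graph monomial, then to a forest monomial, and finally to one of the chosen representatives $\mmm(F_\pi)$.

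\emph{Step 1.} Let $u$ be a monomial. If some variable appears in $u$ with exponent at least $2$, then some $x_S^2$ divides $u$, and $u\in I_{n,m}$. Otherwise, by Remark~\ref{rmk:graph-mono}, $u=\mmm(G)$ for a simple graph $G$ on $[n]$.

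\emph{Step 2.} If $G$ contains a cycle $C$, then $\mmm(C)$ divides $\mmm(G)$. By Corollary~\ref{cor:cycle-vanish}, $\mmm(C)\in I_{n,m}$, so $\mmm(G)\in I_{n,m}$. Hence we may assume $G=F$ is a forest.

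\emph{Step 3.} Let $\pi=\tilde{\pi}(F)$. If every block of $\pi$ has size at most $m$, then $\pi\in\Pi_{n,m}$. Since $\tilde{\pi}(F)=\tilde{\pi}(F_\pi)$, Lemma~\ref{lem:diff-two-forests} gives $\mmm(F)\equiv\mmm(F_\pi)\mod{I_{n,m}}$.

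The remaining case, which is the main point, is when some component $K$ of $F$ is a tree with more than $m$ vertices, say $\lvert V(K)\rvert\ge m+1$. I would show that $K$ contains a subtree with exactly $m+1$ vertices. To build it, start from one vertex and repeatedly add an adjacent vertex of $K$; this is possible because $K$ is connected. The resulting subtree $T$ satisfies $\mmm(T)\mid\mmm(F)$. Since $\mmm(T)$ is a generator of $I_{n,m}$, we get $\mmm(F)\in I_{n,m}$.

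One degenerate case needs a check. When $m=0$ the subtree is a single vertex, and $\mmm(T)=1$ (the empty product). Then $I_{n,0}$ is the unit ideal and $\Pi_{n,0}=\varnothing$ for $n\ge1$, so the statement still holds trivially.

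Combining the three steps, every monomial reduces modulo $I_{n,m}$ to $0$ or to some $\mmm(F_\pi)$ with $\pi\in\Pi_{n,m}$, which proves that these monomials span the quotient. No step is genuinely hard; the only subtle point is making sure the cycle and tree generators cover all the bad cases, and Corollary~\ref{cor:cycle-vanish} together with the subtree-growing argument does exactly that.
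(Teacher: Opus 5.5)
Your proof is correct and matches the paper's argument step for step. Squares kill repeated variables, Corollary~\ref{cor:cycle-vanish} kills non-forests, and each forest is handled either by Lemma~\ref{lem:diff-two-forests} or by a divisor $\mmm(T)$ with $T$ a subtree on $m+1$ vertices. Your vertex-by-vertex construction of that subtree and your check of the degenerate case $m=0$ are small details the paper leaves implicit.
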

\begin{proof}
    Since $x_S^2\in I_{n,m}$ for all $S\in\binom{[n]}{2}$, all the monomials with repeated variables are killed by $I_{n,m}$. Therefore, the collection of all graph monomials $\mmm(G)$ descends to a spanning set of $\CC\Big[\xxxgraph{n}\Big]\Big/ I_{n,m}$. Whenever $G$ contains a cycle as a subgraph, Corollary~\ref{cor:cycle-vanish} indicates that $\mmm(G)$ vanishes modulo $I_{n,m}$. Consequently, the family of monomials $\{\mmm(F)\,:\,\text{$F$ is a forest on $[n]$}\}$ descends to a spanning set of $\CC\Big[\xxxgraph{n}\Big]\Big/ I_{n,m}$.

    It suffices to show that for any forest $F$ on the vertex set $[n]$ we can write $\mmm(F)\mod{I_{n,m}}$ as a linear combination of the congruence classes $\mmm(F_\pi)\mod{I_{n,m}}$ with $\pi\in\Pi_{n,m}$. We prove it by regarding two cases. If $\tilde{\pi}(F)\not\in\Pi_{n,m}$, then $F$ must contain a connected component with more than $m$ vertices, so there exists a subtree $T$ of $F$ with $m+1$ vertices. Then the fact that $\mmm(T)\mid\mmm(F)$ indicates that $\mmm(F)\in I_{n,m}$. If $\tilde{\pi}(F)\in\Pi_{n,m}$, Lemma~\ref{lem:diff-two-forests} yields that $\mmm(F)\equiv\mmm(F_{\tilde{\pi}(F)})\mod{I_{n,m}}$, concluding our proof.
\end{proof}

Now we obtain lots of bases of $\CC\Big[\xxxgraph{n}\Big]\Big/ I_{n,m}$ and the presentation $R(\Pi_{n,m})=\CC\Big[\xxxgraph{n}\Big]\Big/ I_{n,m}$:
\begin{corollary}\label{cor:basis-presentation}
    For any family of forests $\{F_\pi\}_{\pi\in\Pi_{n,m}}$ on the vertex set $[n]$ with $\tilde{\pi}(F_\pi) = \pi$ for all $\pi\in\Pi_{n,m}$, the family of monomials $\{\mmm(F_\pi)\,:\,\pi\in\Pi_{n,m}\}$ descends to a basis of $R(\Pi_{n,m})$. Furthermore, we have
    \[\gr\II(\Pi_{n,m}) = I_{n,m}.\]
\end{corollary}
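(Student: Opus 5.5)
This is a dimension count. By Lemma~\ref{lem:general-contain} we have $I_{n,m}\subseteq\gr\II(\Pi_{n,m})$, which gives a graded surjection
\[\CC\Big[\xxxgraph{n}\Big]\Big/ I_{n,m}\twoheadrightarrow R(\Pi_{n,m}).\]
Taking dimensions and using the orbit harmonics isomorphism \eqref{eq:grad-isom}, we get
\[\dim\CC\Big[\xxxgraph{n}\Big]\Big/ I_{n,m}\ \ge\ \dim R(\Pi_{n,m}) = \dim\CC[\Pi_{n,m}] = \lvert\Pi_{n,m}\rvert.\]
Conversely, Lemma~\ref{lem:general-span} says that for any choice of forests $\{F_\pi\}$ with $\tilde{\pi}(F_\pi)=\pi$, the $\lvert\Pi_{n,m}\rvert$ monomials $\mmm(F_\pi)$ span $\CC\Big[\xxxgraph{n}\Big]\Big/ I_{n,m}$. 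So that dimension is at most $\lvert\Pi_{n,m}\rvert$.

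The two bounds agree, so $\dim\CC\Big[\xxxgraph{n}\Big]\Big/ I_{n,m}=\lvert\Pi_{n,m}\rvert$. It follows that the spanning family $\{\mmm(F_\pi)\}$ is linearly independent modulo $I_{n,m}$, and hence a basis of the quotient. The surjection above is then a surjection between finite-dimensional spaces of equal dimension, so it is an isomorphism and its kernel $\gr\II(\Pi_{n,m})/I_{n,m}$ vanishes. This gives $\gr\II(\Pi_{n,m})=I_{n,m}$.

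Since $R(\Pi_{n,m})$ is now literally $\CC\Big[\xxxgraph{n}\Big]\Big/ I_{n,m}$, the family $\{\mmm(F_\pi)\}$ descends to a basis of $R(\Pi_{n,m})$.

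I expect no real obstacle here, because the substantive work was done in Lemma~\ref{lem:diff-two-forests} and Lemma~\ref{lem:general-span}. The only point needing care is that the family contains exactly $\lvert\Pi_{n,m}\rvert$ monomials, so that the spanning bound gives the inequality we want. This holds because the $F_\pi$ are indexed by distinct $\pi$; even if two of the monomials coincided, the set would only have fewer elements, which still bounds the dimension by $\lvert\Pi_{n,m}\rvert$. In fact no coincidence can occur, since $\tilde{\pi}$ recovers $\pi$ from $F_\pi$.
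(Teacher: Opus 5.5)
Your proposal is correct and uses essentially the same dimension count as the paper: Lemma~\ref{lem:general-span} gives a spanning set of size $\lvert\Pi_{n,m}\rvert$, Lemma~\ref{lem:general-contain} gives the containment, and $\dim R(\Pi_{n,m})=\lvert\Pi_{n,m}\rvert$ then forces both the basis statement and $\gr\II(\Pi_{n,m})=I_{n,m}$. The only cosmetic difference is the order: the paper first pushes the spanning set down to $R(\Pi_{n,m})$ and counts there, whereas you compute $\dim \CC[\xxxgraph{n}]/I_{n,m}$ and then conclude that the surjection is an isomorphism.
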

\begin{proof}
    Lemma~\ref{lem:general-span} indicates that the family of monomials $\{\mmm(F_\pi)\,:\,\pi\in\Pi_{n,m}\}$ descends to a spanning set of $\CC\Big[\xxxgraph{n}\Big]\Big/ I_{n,m}$, and Lemma~\ref{lem:general-contain} reveals the containment $I_{n,m}\subseteq\gr\II(\Pi_{n,m})$. Therefore, $\{\mmm(F_\pi)\,:\,\pi\in\Pi_{n,m}\}$ also descends to a spanning set of $R(\Pi_{n,m})$. However, the cardinality of this spanning set equals $\lvert\Pi_{n,m}\rvert=\dim_\CC(R(\Pi_{n,m}))$, which forces this spanning set to be a basis and also forces the containment $I_{n,m}\subseteq\gr\II(\Pi_{n,m})$ to be an equality.
\end{proof}

\begin{theorem}\label{thm:general-satandard}
    Fix a monomial order $<$ on $\CC\Big[\xxxgraph{n}\Big]$. For any $B\subseteq[n]$, let $T_B$ be the tree with vertex set $B$ such that \[\mmm(T_B) = \min\{\mmm(T)\,:\,\text{$T$ has vertex set $B$}\}.\]
    Then, the family of monomials
    \[\bigg\{\prod_{B\in\pi}\mmm(T_B)\,:\pi\in\Pi_{n,m}\bigg\}\]
    descends to the standard monomial basis of $R(\Pi_{n,m})=\CC\Big[\xxxgraph{n}\Big]\Big/I_{n,m}$.
\end{theorem}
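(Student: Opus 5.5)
The plan is to show that every monomial outside the proposed family lies in $\text{in}_< I_{n,m}$, where $I_{n,m}=\gr\II(\Pi_{n,m})$ by Corollary~\ref{cor:basis-presentation}. Then count: the set of standard monomials is a basis of $R(\Pi_{n,m})$ of dimension $\lvert\Pi_{n,m}\rvert$, and the proposed family has exactly $\lvert\Pi_{n,m}\rvert$ members. Distinct partitions $\pi$ give distinct monomials, since the forest $\bigsqcup_{B\in\pi}T_B$ recovers $\pi$ as its component partition. So once we show that every standard monomial belongs to the proposed family, the two sets coincide.

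We classify an arbitrary monomial $u$ into four cases.
\begin{itemize}
\item If $u$ has a repeated variable, then $x_S^2\mid u$. Since $x_S^2\in I_{n,m}$ is its own initial monomial, $u\in\text{in}_< I_{n,m}$.
\item Otherwise $u=\mmm(G)$ for a simple graph $G$. If $G$ contains a cycle $C$, Corollary~\ref{cor:cycle-vanish} gives $\mmm(C)\in I_{n,m}$, so $\mmm(C)=\text{in}_<\mmm(C)$ lies in the initial ideal and divides $u$.
\item If $G$ is a forest with a component of more than $m$ vertices, it contains a subtree $T$ with $m+1$ vertices. Then $\mmm(T)$ is a generator of $I_{n,m}$ and a monomial dividing $u$.
\item The remaining case is $G$ a forest with $\tilde{\pi}(G)=\pi\in\Pi_{n,m}$, where some component tree $T$ on vertex set $B$ (with $\lvert B\rvert\ge 2$) satisfies $T\neq T_B$.
\end{itemize}

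In the last case, $T_B$ is well defined and unique because distinct trees on $B$ give distinct monomials and $<$ is total. Lemma~\ref{lem:diff-two-forests}, applied to the forests $T$ and $T_B$ (each padded with isolated vertices), gives $\mmm(T)-\mmm(T_B)\in I_{n,m}$. Its initial monomial is $\mmm(T)$, because $\mmm(T_B)<\mmm(T)$ by the minimality of $T_B$. Since $\mmm(T)\mid u$, again $u\in\text{in}_< I_{n,m}$.

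It follows that every standard monomial has the form $\prod_{B\in\pi}\mmm(T_B)$ with $\pi\in\Pi_{n,m}$. The counting argument above then forces the standard monomials to be exactly this family, so it is the standard monomial basis.

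The main obstacle is the bookkeeping in the last case: we must use that $T_B$ depends only on $B$, so the product over blocks is exactly the minimal representative, and handle singleton blocks by taking $\mmm(T_B)=1$. Everything else follows directly from Lemma~\ref{lem:diff-two-forests}, Corollary~\ref{cor:cycle-vanish}, and Corollary~\ref{cor:basis-presentation}.
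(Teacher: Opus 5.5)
Your proof is correct and follows the paper's route. Both arguments use the same case split to place every monomial outside the family in $\text{in}_< I_{n,m}$: a repeated variable, a cycle (via Corollary~\ref{cor:cycle-vanish}), or a forest (via Lemma~\ref{lem:diff-two-forests}). The count $\dim_\CC R(\Pi_{n,m})=\lvert\Pi_{n,m}\rvert$ then finishes. You also treat forests with a component of more than $m$ vertices separately, and you use a single component $T\neq T_B$ with $\mmm(T)$ dividing $u$ rather than comparing whole products; this is slightly more careful than the paper's write-up, which tacitly assumes $\tilde{\mmm}\neq\prod_{B\in\tilde{\pi}(F)}\mmm(T_B)$.
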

\begin{example}\label{ex:general-standard}
    Consider the variable order given by: $x_S<x_T$ if and only if $S=\{i<j\},T=\{k<l\}$ and
    \begin{itemize}
        \item either $i<k$, or
        \item $i=k$ and $j<l$.
    \end{itemize}
    Take the monomial order $<$ to be the lexicographic order. Then, for any nonempty set $B=\{i_1<i_2<\dots<i_b\}\subseteq[n]$ we have $\mmm(T_B)=\prod_{k=2}^bx_{\{i_1,i_k\}}$. According to Theorem~\ref{thm:general-satandard}, we can use some products of these monomials $\mmm(T_B)$ to construct the standard monomial basis of $R(\Pi_{n,m})$.
\end{example}
\begin{proof}[Proof of Theorem~\ref{thm:general-satandard}]
    For convenience, denote the standard monomial basis by $\mathcal{SB}$. We claim that
    \[\mathcal{SB}\subseteq\bigg\{\prod_{B\in\pi}\mmm(T_B)\,:\pi\in\Pi_{n,m}\bigg\}.\]
    In fact, it suffices to show the reverse inclusion for the complements. Consider any monomial $\tilde{\mmm}\notin\big\{\prod_{B\in\pi}\mmm(T_B)\,:\pi\in\Pi_{n,m}\big\}$. If $\tilde{\mmm}$ has repeated variables, then $\tilde{\mmm}$ vanishes modulo $I_{n,m}$ by definition, revealing that $\tilde{\mmm}\notin\mathcal{SB}$. Now suppose that $\tilde{\mmm}$ has no repeated variables, which means that $\tilde{\mmm}=\mmm(G)$ for some simple graph $G$ on $[n]$. If $G$ is not a forest, Corollary~\ref{cor:cycle-vanish} indicates that $\tilde{\mmm}$ vanishes modulo $I_{n,m}$ and hence that $\tilde{\mmm}\notin\mathcal{SB}$. We hence suppose further that $\tilde{\mmm}=\mmm(F)$ for some forest $F$ on $[n]$. Then, Lemma~\ref{lem:diff-two-forests} indicates that
    \[\tilde{\mmm}-\prod_{B\in\tilde{\pi}(F)} \mmm(T_B)\in I_{n,m}.\]
    However, by construction we know that $\tilde{\mmm}>\prod_{B\in\tilde{\pi}(F)} \mmm(T_B)$, meaning that \[\tilde{\mmm}=\text{in}_< \big(\tilde{\mmm}-\prod_{B\in\tilde{\pi}(F)} \mmm(T_B)\big)\in\text{in}_< I_{n,m}.\]
    It follows that $\tilde{\mmm}\notin\mathcal{SB}$. Therefore, our claim above is proved.

    Nonetheless, $\lvert\mathcal{SB}\rvert=\dim_\CC(R(\Pi_{n,m}))=\lvert\Pi_{n,m}\rvert$, which forces the containment claimed above to be an equality, finishing our proof.
\end{proof}

As the monomial bases in Corollary~\ref{cor:basis-presentation} modulo $I_{n,m}$ are all stable under the $\symm_n$-action, we obtain the graded $\symm_n$-module structure of $R(\Pi_{n,m})$.
\begin{proposition}\label{prop:general-module-str}
    For $d\ge 0$, we have that
    \[\Frob(R(\Pi_{n,m})_d)=\sum_{\substack{\lambda\vdash n\\ \lambda_1\le m \\ \ell(\lambda)=n-d}}SF_\lambda\]
    where $SF_\lambda \coloneqq \prod_{i=1}^r h_{a_i}[h_{b_i}]$ if we write $\lambda=(b_1^{a_1},b_2^{a_2},\dots,b_r^{a_r})$ with $b_1>b_2>\dots>b_r>0$. 
\end{proposition}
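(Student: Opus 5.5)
The plan is to pick, via Corollary~\ref{cor:basis-presentation}, a family of forests $\{F_\pi\}$ for which the resulting monomial basis is permuted by $\symm_n$ modulo $I_{n,m}$, and then read off the graded character as a permutation character. The forests are not canonical, so the basis monomials themselves are not literally permuted. However, for any $w\in\symm_n$ we have $w\cdot\mmm(F_\pi)=\mmm(w\cdot F_\pi)$, and $w\cdot F_\pi$ is a forest with $\tilde{\pi}(w\cdot F_\pi)=w\cdot\pi$. Lemma~\ref{lem:diff-two-forests} then gives
\[w\cdot\mmm(F_\pi)\equiv\mmm(F_{w\cdot\pi})\mod{I_{n,m}}.\]
So the basis of $R(\Pi_{n,m})=\CC\Big[\xxxgraph{n}\Big]\Big/I_{n,m}$ indexed by $\Pi_{n,m}$ is permuted exactly as $\symm_n$ permutes set partitions.

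Next I track degrees. The forest $F_\pi$ has one spanning tree on each block $B$, with $|B|-1$ edges. Hence $\deg\mmm(F_\pi)=\sum_{B\in\pi}(|B|-1)=n-\ell(\pi)$, where $\ell(\pi)$ is the number of blocks. The degree-$d$ piece $R(\Pi_{n,m})_d$ therefore has as basis the classes $\mmm(F_\pi)$ with $\pi\in\Pi_\lambda$, $\lambda_1\le m$, $\ell(\lambda)=n-d$. The $\symm_n$-action preserves block type, so as $\symm_n$-modules
\[R(\Pi_{n,m})_d\cong\bigoplus_{\substack{\lambda\vdash n,\ \lambda_1\le m\\ \ell(\lambda)=n-d}}\CC[\Pi_\lambda].\]

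It then remains to show $\Frob(\CC[\Pi_\lambda])=SF_\lambda$, which is the only real computation. Write $\lambda=(b_1^{a_1},\dots,b_r^{a_r})$ with $b_1>\dots>b_r$. The stabilizer of a fixed $\pi\in\Pi_\lambda$ is $\prod_{i}(\symm_{a_i}\wr\symm_{b_i})$, sitting inside the Young subgroup $\prod_i\symm_{a_ib_i}$. Because the $b_i$ are distinct, blocks of different sizes cannot be swapped, so the stabilizer is exactly this product. Inducing in stages gives
\[\CC[\Pi_\lambda]\cong\ind_{\prod_i\symm_{a_ib_i}}^{\symm_n}\bigotimes_i\CC[\Pi_{(b_i^{a_i})}].\]
This module is the iterated induction product $\CC[\Pi_{(b_1^{a_1})}]\circ\cdots\circ\CC[\Pi_{(b_r^{a_r})}]$. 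By Lemma~\ref{lem:plethysm-set-partition} and the compatibility $\Frob(V\circ W)=\Frob(V)\Frob(W)$, its Frobenius image is $\prod_i h_{a_i}[h_{b_i}]=SF_\lambda$. Summing over the relevant $\lambda$ yields the formula.

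I expect no serious obstacle. The only delicate points are checking that the stabilizer is exactly the product of wreath products, which uses that the $b_i$ are distinct, and that the permutation action on the basis holds modulo $I_{n,m}$, which is what Lemma~\ref{lem:diff-two-forests} supplies.
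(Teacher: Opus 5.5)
Your proposal is correct and follows essentially the same route as the paper: choose forests $F_\pi$, use Lemma~\ref{lem:diff-two-forests} to see that $\symm_n$ permutes the basis of Corollary~\ref{cor:basis-presentation} modulo $I_{n,m}$, read off the degrees $n-\ell(\lambda)$, and identify $\CC[\Pi_\lambda]$ with the induction product of the modules $\CC[\Pi_{(b_i^{a_i})}]$. Your stabilizer and induction-in-stages argument fills in that last isomorphism, which the paper only asserts, and you correctly write $h_{a_i}[h_{b_i}]$ where the paper's proof has the typo $h_{a_r}[h_{b_r}]$.
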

\begin{proof}
    Choose a family of forests $\{F_\pi\}_{\pi\in\Pi_{n,m}}$ on $[n]$ with $\tilde{\pi}(F_\pi)=\pi$ for all $\pi\in\Pi_{n,m}$. Corollary~\ref{cor:basis-presentation} states that $\{\mmm(F_\pi)\,:\,\pi\in\Pi_{n,m}\}$ descends to a basis of $R(\Pi_{n,m})$. Furthermore, Lemma~\ref{lem:diff-two-forests} reveals that this basis is compatible with the $\symm_n$-action, namely
    \[w\cdot\mmm(F_\pi)\equiv\mmm(F_{w\cdot\pi})\mod{I_{n,m}}\]
    for all $w\in\symm_n$. Note that for $\pi\in\Pi_\lambda$ with $\lambda\vdash n$ we have $\deg(\mmm(F_\pi))=\lvert E(F_\pi)\rvert = n-\ell(\lambda)$. Therefore, we deduce $\symm_n$-module isomorphisms
    \[R(\Pi_{n,m})_d\cong\CC\Bigg[\bigsqcup_{\substack{\lambda\vdash n\\ \lambda_1\le m\\ \ell(\lambda)=n-d}}\Pi_\lambda\Bigg]=\bigoplus_{\substack{\lambda\vdash n\\ \lambda_1\le m\\ \ell(\lambda)=n-d}}\CC[\Pi_\lambda]\]
    for $d\ge 0$, from which it follows that
    \[\Frob(R(\Pi_{n,m})_d)=\sum_{\substack{\lambda\vdash n\\ \lambda_1\le m\\ \ell(\lambda)=n-d}}\Frob(\CC[\Pi_\lambda]).\]
    Write $\lambda=(b_1^{a_1},b_2^{a_2},\dots,b_r^{a_r})$ with $b_1>b_2>\dots>b_r>0$. Then we have
    \[\CC[\Pi_\lambda]\cong\CC[\Pi_{(b_1^{a_1})}]\circ\dots\circ\CC[\Pi_{(b_r^{a_r})}]\]
    and thus
    \[\Frob(\CC[\Pi_\lambda]) =\prod_{i=1}^r h_{a_r}[h_{b_r}] = SF_\lambda,\]
    concluding our proof.
\end{proof}

\begin{remark}\label{rmk:restriction}
    For $\lambda\vdash n$ with $\lambda_1\le m$, we have $\Pi_\lambda\subseteq\Pi_{n,m}$ and hence $\gr\II(\Pi_{n,m})\subseteq\gr\II(\Pi_\lambda)$. Consequently, we have $\gr\Frob(R(\Pi_\lambda);q)\le\grFrob(R(\Pi_{n,m});q)$. As a result, the graded module structure in Proposition~\ref{prop:general-module-str} gives us an upper bound for $\grFrob(R(\Pi_{\lambda});q)$. If we take $\lambda=(b^a)$, this upper bound may help us understand $\grFrob(R(\Pi_{(b^a)});q)$ and hence understand the plethysm $h_a[h_b]=\grFrob(R(\Pi_{(b^a)});1)$.
\end{remark}

\section{Conclusion}\label{sec:conclusion}
In addition to Conjecture~\ref{conj:contain} tightly related to Foulkes' conjecture, we present some other future directions regarding log-concavity, Gröbner Theory and graded $\symm_n$-module structure.

To study Conjecture~\ref{conj:contain}, it may be better to first analyze the structure of $R(\Pi_{(b^a)})$.
\begin{problem}\label{prob:h_a[h_b]}
    For $a,b\ge 3$, find an explicit generating set of $\gr\II(\Pi_{(b^a)})$, a graded character formula of $R(\Pi_{(b^a)})$, and the standard monomial basis of $R(\Pi_{(b^a)})$ with respect to a strategic monomial order.
\end{problem}

A sequence $(a_1,\dots,a_m)$ of nonnegative numbers is called \emph{log-concave} if we have $a_i^2\ge a_{i-1}\cdot a_{i+1}$ for $1<i<m$. One way to generalize log-concavity is to incorporate equivariance with respect to a group $G$. Specifically, a sequence $(V_1,\dots,V_m)$ of $G$-modules is called \emph{$G$-log-concave} if we have $G$-module injections $V_{i-1}\otimes V_{i+1}\hookrightarrow V_i\otimes V_i$ for $1<i<m$, where $G$ diagonally acts on tensor products $V\otimes W$ of $G$-modules. We further call a graded $G$-module $V=\bigoplus_{d=0}^m V_m$ \emph{$G$-log-concave} if $(V_0,\dots,V_m)$ is $G$-log-concave. We raise an open problem about $\symm_n$-log-concavity.

For the graded $\symm_n$-modules studied in Subsections~\ref{subsec:b=2} and \ref{subsec:length-generalize}, we have verified the conjecture below for $n\le 20$ by coding.
\begin{conjecture}\label{conj:log-concavity}
    For integers $n>0$, both $\CC\Big[\xxxgraph{n}\Big]\Big/I(n)$ and $\CC\Big[\xxxgraph{n}\Big]\Big/J(n)$ are $\symm_n$-log-concave.
\end{conjecture}

For the graded $\symm_n$-modules $R(\Pi_{n,m})$ in Subsection~\ref{subsec:general}, the $\symm_n$-log-concavity fails when we have $n=7,8$ and $4\le m\le n$. However, according to our coding output, the $\symm_n$-log-concavity holds for $9\le n\le16$, so we have the following conjecture.
\begin{conjecture}\label{conj:general-log-concavity}
    For $n\ge 9$, $R(\Pi_{n,m})$ is $\symm_n$-log-concave.
\end{conjecture}

Another direction is to find standard monomial bases. While we have shown that $R(\Pi_{n,m})$ admits concise standard monomial bases with respect to any monomial order, we cannot do similar things to any rings occurring in Subsections~\ref{subsec:b=2} and \ref{subsec:length-generalize}.
\begin{problem}\label{prob:standard}
    Choose a strategic monomial order on $\CC\Big[\xxxgraph{n}\Big]$ and find the standard monomial basis of $\CC\Big[\xxxgraph{n}\Big]\Big/ I(n)$ (resp. $\CC\Big[\xxxgraph{n}\Big]\Big/J(n)$) with respect to this monomial order.
\end{problem}

Recall that $\CC\Big[\xxxgraph{n}\Big]\Big/I(n)=R(\Pi_{(2^{n/2})})$ and $\CC\Big[\xxxgraph{n}\Big]\Big/J(n)=R(\Pi_{(n/2)^2})$ for even integers $n$. If $n$ is odd, we ask for similar orbit harmonics constructions.
\begin{problem}\label{prob:orbit-harmonics}
    For odd integers $n>0$, find finite loci $\ZZZ_n,\ZZZ_n^\prime\subseteq\CC^{\binom{[n]}{2}}$ such that
    \[\CC\Big[\xxxgraph{n}\Big]\Big/I(n)=R(\ZZZ_n),\quad\CC\Big[\xxxgraph{n}\Big]\Big/J(n)=R(\ZZZ_n^\prime).\]
\end{problem}

\section{Acknowledgements}\label{sec:acknowledgements}
The author is thankful to Brendon Rhoades for inspiring conversations and suggestions.

\printbibliography

\end{document}